\newtheorem{theorem}{Theorem}[section]%
\newtheorem{corollary}[theorem]{Corollary}%
\newtheorem{lemma}[theorem]{Lemma}%
\theoremstyle{definition}%
\theoremstyle{remark}%
\newtheorem{remark}[theorem]{Remark}%
\numberwithin{equation}{section}%
\renewcommand{\r}[1]{\mathbb{R}^{#1}}%                  Euclidean space $R^{n}$
\newcommand{\ccr}[1]{C_{c}^{\infty}\left(\r{#1}\right)}% Smooth functions with compact supp in $R^{n}$
\newcommand{\sr}[1]{\mathscr{S}\left(\r{#1}\right)}%     Schwartz space of rapidly decreasing functions in $R^{n}$
\newcommand{\sh}[1]{\mathscr{S}\left(\h{#1}\right)}%     Schwartz space of rapidly decreasing functions in $R^{n}$
\newcommand{\sprimer}[1]{\mathscr{S}'\left(\r{#1}\right)}%   Tempered distributions in $R^{n}$
\newcommand{\h}[1]{\mathbb{R}_{+}^{#1}}%                Half-space $R_{+}^{n}$
\newcommand{\cch}[1]{C_{c}^{\infty}\left(\h{#1}\right)}%Smooth functions with compact support in $R_{+}^{n}$
\newcommand{\bigo}{\operatorname{O}}%                   Big-O notation
\newcommand{\id}{\operatorname{I}}%                     Identity matrix
\newcommand{\pv}{(\operatorname{P.V.})}%                   Cauchy principle value
\newcommand{\sgn}{\operatorname{sgn}}%                  Signum functions
\newcommand{\supp}{\operatorname{supp}}%                Support of a function
\newcommand{\vol}{\operatorname{vol}}
\newcommand{\hilbert}{\mathcal{H}}%                     Hilbert transform in one variable
\newcommand{\norton}{\mathcal{P}}%                      Norton's convolutional operator
\newcommand{\radon}{\mathcal{R}}%                       Radon transform
\newcommand{\sonar}{\mathcal{S}}%                       Sonar transform aka spherical Radon transform
\newcommand{\n}{\mathcal{N}}%                      Norton's map \( f -> F \)
\newcommand{\m}{\mathcal{M}}%                      Norton's map \( g -> G \)
\renewcommand{\k}{\mathcal{K}}%
\renewcommand{\l}{\mathcal{L}}
\newcommand{\q}{\mathcal{Q}_{p}}% Q-operator 
\title[Inversion of spherical means]{Inversion of the spherical mean transform with sources on a hyperplane}
\author{Aleksei Beltukov}
\address{Department of Mathematics, University of the Pacific, Stockton CA 95211}
\email{abeltuko@pacific.edu}
\date{\today}
\subjclass[2000]{44A12}
\keywords{spherical means, Radon transform, acoustic tomography}
\begin{document}
\maketitle
\begin{abstract}
The object of this study is an integral operator~$\sonar$ which averages functions in the Euclidean upper half-space~$\h{n}$ over the half-spheres centered on the topological boundary~$\partial \h{n}$.  By generalizing Norton's approach to the inversion of arc means in the upper half-plane, we intertwine~$\sonar$ with a convolution operator~$\norton$.  The latter integrates functions in~$\r{n}$ over the translates of a paraboloid of revolution.  Our main result is a set of inversion formulas for~$\norton$ and~$\sonar$ derived using a combination of Fourier analysis and classical Radon theory.  These formulas appear to be new and are suitable for practical reconstructions.
\end{abstract}
%\tableofcontents
%\input{intro}
% Last edit: Wednesday, October 7, 2009  14:19
% Status: needs proofreading

\section{Introduction}
\label{s:intro}

% Explain what the article is about

The transform of the title~$\sonar$ averages functions in the Euclidean upper half-space \( \h{n+1} \equiv \r{n} \times \h{1} \) \( (n \geq 1) \) over the half-spheres centered on the topological boundary~$\partial \h{n+1}$.  Courant and Hilbert's well known result in~\cite{Courant-Hilbert-1962} implies that~$\sonar$ is injective on the space of continuous functions.  If one further restricts the domain of~$\sonar$ to rapidly decreasing functions, as done in~\cite{Andersson-1988}, then~$\sonar$ can be shown to have a continuous inverse~$\sonar^{-1}$.  The main purpose of this article is to present a novel formula for that inverse which compliments the ones in~\cite{Norton-1980a}, \cite{Fawcett-1985}, \cite{Andersson-1988}, \cite{Nessibi-et-al-1995}, \cite{Xu-Wang-2005}, and~\cite{Klein-2003} in several important ways:
\begin{enumerate}
    \item Our inversion of spherical means holds in all dimensions\footnote{The inversion formulas in~\cite{Norton-1980a} and~\cite{Xu-Wang-2005} are derived only in dimensions two and three, respectively.}, has explicit form involving the adjoint operator~$\sonar^{*}$ \footnote{The inversion formula in~\cite{Andersson-1988} is stated in Fourier domain.  In practice it is often preferable to have a back-projection type formula (that is, involving the adjoint) as it leads to more stable reconstruction algorithms.}, and applies without restrictions to all rapidly decreasing functions in the Schwartz space~$\sh{n+1}$.\footnote{In~\cite{Norton-1980a} the spherical means are assumed to be band-limited in one variable; as shown in~\cite{Klein-2003}, the inversion formulas in~\cite{Fawcett-1985}, \cite{Andersson-1988} and~\cite{Nessibi-et-al-1995} require strong analytic constraints to be imposed on the functional domains \emph{in addition} to rapid decrease.}
    \item The validity of our inversion formula follows directly from the classical Radon inversion.  We thus establish a new relation between the spherical mean transform~$\sonar$ and the classical Radon transform~$\radon$ which, together with~\cite{Denisjuk-1999a} and~\cite{Beltukov-Feldman-2009}, brings the count for such relations to three.
    \item Our formula for~$\sonar^{-1}$ may be expressed as a composition of a certain convolution operator~$\norton^{-1}$ with a pair of simple one-dimensional mappings. That compositional structure lends itself to a robust Fourier-based algorithm suitable for tomographic reconstructions.
\end{enumerate}

% Explain your interest in the problem

Although our interest in spherical means is purely mathematical, we would like to mention a few practical applications before delving into the theory.  The following example, taken from~\cite{Fawcett-1985}, explains why~$\sonar$ continues to draw attention almost fifty years after having been first considered by Courant and Hilbert; it also explains why we refer to the centers of the half-spheres as \emph{sources}.

Consider the following nonhomogeneous hyperbolic equation with a delta-source at
\(
  (x_{s},y_{s},0) \in \r{3}
\)
and zero initial conditions:
\begin{equation} \label{e:hyperbolic}
  \left\{
    \begin{split}
      &\frac{\partial^{2} u}{\partial t^{2}} - \nabla^{2} u + f(x,y,z) \, u =
       \delta(x - x_{s}) \, \delta(y - y_{s}) \, \delta(z) \, \delta(t), \\
      &u(x,y,z,0) = \frac{\partial u}{\partial t}(x,y,z,0) = 0.
    \end{split}
  \right.
\end{equation}
Think of~$(x_{s},y_{s},0)$ as a variable point in the $xy$-plane and denote by~$g(x_{s},y_{s},t)$ the solution of Problem~\eqref{e:hyperbolic} evaluated at the location of the source:
\(
  \{ x = x_{s}, y = y_{s}, z = 0 \}.
\)
The \emph{inverse scattering problem} associated with Equation~\eqref{e:hyperbolic} is to determine the coefficient~$f(x,y,z)$ from~$g(x_{s},y_{s},t)$.  This problem and its close variants arise in sonar and radar imaging (see~\cite{Louis-Quinto-2000} and~\cite{Cheney-2001}, respectively), ultrasound diagnostics~\cite{Norton-1980a}, and seismic tomography~\cite{Fawcett-1985}.  As explained in~\cite{Fawcett-1985}, by making the Born approximation one can interpret the data~$g(x_{s},y_{s},t)$ as the average of~$f(x,y,z)$ over the sphere of radius~$t/2$ centered at~$(x_{s},y_{s},0)$.  If $f$ is compactly supported in the upper (or lower) half-space, which is frequently the case \footnote{For instance, $f$ could describe the acoustic reflectivity of underwater or underground environment both of which can be thought of as being semi-infinite in extent.}, the averages over spheres become averages over half-spheres.  In that case the solution of the linearized inverse scattering problem for Equation~\eqref{e:hyperbolic} is equivalent to the inversion of the spherical mean transform~$\sonar$.

% Organization

The inspiration for our results comes from Norton's work in~\cite{Norton-1980a} on inversion of~$\sonar$ in~$\h{2}$.  In Section~\ref{s:preliminaries} we will show how Norton's approach can be generalized to an arbitrary number of dimensions.  Specifically, we will define a pair of mappings which intertwine~$\sonar$ with an operator~$\norton$ acting on functions in the full Euclidean space.  The latter operator is of convolution type: it integrates functions in~$\r{n+1}$ over the translates of a paraboloid.  We will exploit the convolutional nature of~$\norton$ in Section~\ref{s:fourier} where we find~$\norton^{-1}$ using distribution theory.  Our main result, the formula for~$\sonar^{-1}$, follows from the formula for~$\norton^{-1}$ and has the general form
\[
  f = y \, ( \sonar^{*} \circ \k_{y} \circ \sonar) (f), \quad f(x,y) \in \sh{n+1},
\]
where~$\k_{y}$ is a one-dimensional operator defined in Section~\ref{s:results}. The advantage of our inversion formula over a similar result in~\cite{Nessibi-et-al-1995} is that our one-dimensional operator~$\k$ is a simple pull-back of the classical $\Lambda$-operator, whereas the operator~$K$ in the formula \( f = (\sonar^{*} \circ K \circ \sonar)(f) \) derived in~\cite{Nessibi-et-al-1995} is a much more complicated fractional power of the Laplacian.  Furthermore, as we show in Section~\ref{s:radon}, the presence of the $\Lambda$-operator, common in Radon theory, is not coincidental: our inversion formula for the convolution~$\norton$ follows directly from the classical Radon inversion formula. In fact, we show in Theorem~\ref{t:ip_radon} that the spherical mean transform~$\sonar$ is, in a sense, more naturally inverted by the classical Radon transform.
\par
The author is deeply indebted to Dr. Eric Todd Quinto of Tufts University for detailed and helpful suggestions concerning the preparation of the manuscript.  He would also like to thank his colleague Dr. Jialing Dai for stimulating mathematical discussions of Radon and spherical mean transforms.
%\input{preliminaries}
% Last edit: Sunday, September 27, 2009  13:44
% Status: needs proofreading

\section{Preliminaries}
\label{s:preliminaries}

Henceforth it will be convenient to identify
\begin{math}
  \r{n+1} \equiv \r{n} \times \r{1}
\end{math}
and, similarly,
\begin{math}
  \h{n+1} \equiv \r{n} \times \h{1} =
  \{ (x,y) \mid x \in \r{n}, \: y > 0 \}.
\end{math}
The spaces of smooth compactly supported function in~$\r{n+1}$ and~$\h{n+1}$ will be denoted by~$\ccr{n+1}$ and~$\cch{n+1}$, respectively; for Schwartz spaces we will use the symbols~$\sr{n+1}$ and~$\sh{n+1}$.
\par
Let~$f \in \sh{n+1}$ $(n \geq 1)$ be the unknown function to be determined from its spherical means:
\(
  g = \sonar(f).
\)
We parameterize the spherical mean data as follows:
\begin{equation} \label{e:s}
  g(x,y) =
  \frac{2}{|S^{n}|} \, y^{1-n} \,
  \int\limits_{|u| < y}
    \frac{f \left( x + u, \sqrt{y^{2} - |u|^{2}} \right)}
         {\sqrt{y^{2} - |u|^{2}}} \,
  du.
\end{equation}
In Equation~\eqref{e:s}
\begin{math}
  |u| = \sqrt{u_{1}^{2} + \ldots + u_{n}^{2}}
\end{math}
is the Euclidean norm in~$\r{n}$,
\begin{math}
  du = du_{1} \ldots du_{n}
\end{math}
is the Euclidean volume element, and
\begin{equation} \label{e:sn}
  |S^{n}| = \frac{2 \, \pi^{\frac{n+1}{2}}}{\Gamma \left( \frac{n+1}{2} \right)}
\end{equation}
is the well-known expression for the area of the unit sphere
\begin{math}
  S^{n} \subset \r{n+1}.
\end{math}
On occasion we will refer to the hyperplane \( \partial \h{n+1} = \r{n} \) where the half-spheres are centered as the \emph{centerset}.
\par
The somewhat cumbersome choice of parametrization in Equation~\eqref{e:s} is intentional: in fact, it is key to our strategy.  Following Norton in~\cite{Norton-1980a}, let us introduce the mappings \( \n: f \mapsto F \) and \( \m: g \mapsto G \) by
\begin{eqnarray}
    F(x,y) &=&
    \begin{cases}
      y^{-\frac{1}{2}} \, f(x,\sqrt{y}), &y > 0, \\
      0 &y \leq 0,
    \end{cases} \label{e:n} \\
  G(x,y) &=&
    \begin{cases}
      \frac{1}{2} |S^{n}| \, y^{\frac{n-1}{2}} \, g(x,\sqrt{y}), &y > 0, \\
      0 &y \leq 0.
    \end{cases} \label{e:m}
\end{eqnarray}
Together Equations~\eqref{e:s}, \eqref{e:n} and~\eqref{e:m} imply the relation
\begin{equation} \label{e:p}
  G(x,y) =
  \int_{\r{n}}
    F(x + u, y - |u|^{2}) \,
  du,
\end{equation}
which we adopt as the definition of operator~$\norton$.  Furthermore, since the lower-case functions~$(f,g)$ are uniquely determined by the upper-case functions~$(F,G)$ via the inverse mappings \( \n^{-1}: F \mapsto f \) and \( \m^{-1}: G \mapsto g \) given by
\begin{eqnarray}
  f(x,y) &=& y \, F(x,y^{2}) \mid_{y>0}, \label{e:in} \\
  g(x,y) &=& \frac{2}{|S^{n}|} \, y^{1-n} \, G(x,y^{2}) \mid_{y>0}. \label{e:im}
\end{eqnarray}
Thus the spherical mean transform~\eqref{e:s} and the convolution equation~\eqref{e:p} are equivalent relations.
\par
The operational meaning of Equations~\eqref{e:s}, \eqref{e:n}, \eqref{e:m}, and~\eqref{e:p} can be conveniently illustrated by the following commutative diagram:
\begin{equation} \label{e:cd}
  \begin{CD}
    f \in \sh{n+1}         @>\sonar>>         g \in \sonar \left( \sh{n+1} \right)  \\
    @V{\n}VV                                   @V{\m}VV\\
    F \in \sr{n+1}         @>\norton>>        G \in \norton \left( \sr{n+1} \right)
  \end{CD}
\end{equation}
For our purposes it is not necessary to characterize the spaces~$\sonar \left( \sh{n+1} \right)$ and~$\norton \left( \sr{n+1} \right)$.  However, it will be useful to discuss the mapping properties of the convolution operator~$\norton$ on the space of compactly supported functions~$\ccr{n+1}$; we do that in Section~\ref{s:range}.
\par
As can be seen from Diagram~\ref{e:cd}, the operators~$\sonar$ and~$\norton$ are intertwined by the mappings~$\n$ and~$\m$.  Consequently, inverting the operator~$\sonar$ is equivalent to inverting the convolution operator~$\norton$:
\begin{equation} \label{e:is_norton}
  \sonar^{-1} = \n^{-1} \circ \norton^{-1} \circ \m.
\end{equation}
\par
We conclude this section by defining the adjoints~$\norton^{*}$,~$\sonar^{*}$ and a few additional operators needed to state our results in Section~\ref{s:results}.
\par
As follows from Equation~\eqref{e:p}, the $L^{2}$-adjoint~$\norton^{*}$ is given by
\begin{equation} \label{e:p*}
  \norton^{*}(G)(x,y) =
  \int_{\r{n}}
    G(x + u, y + |u|^{2}) \,
  du.
\end{equation}
In defining~$\sonar^{*}$, however, we are going to follow~\cite{Andersson-1988} and~\cite{Klein-2003}\footnote{Norton does not use the adjoint spherical mean transform in~\cite{Norton-1980a}; nor do Xu and Wang in~\cite{Xu-Wang-2005}. Fawcett defines an ``approximate inversion'' operation which is quite similar to~\eqref{e:s*}.  The adjoint~$\sonar^{*}$ used by Nessibi, Trimeche and Rachdi in~\cite{Nessibi-et-al-1995} differs from ours by a constant.}:
\begin{equation} \label{e:s*}
  \sonar^{*}(g)(x,y) =
  \int_{\r{n}}
    g(x + u, \sqrt{y^{2} + |u|^{2}}) \,
  du.
\end{equation}
Finally, for a rapidly decreasing function \( \phi(x,y) \in \sr{n+1} \), we define the Hilbert transform~$\hilbert_{y}(\phi)(x,y)$, acting on the $y$-variable, and a related operator~$\Lambda_{y}$, by
\begin{eqnarray}
  \hilbert_{y}(\phi)(x,y) &=&
    \frac{1}{\pi} \, \pv \,
    \int\limits_{-\infty}^{+\infty}
      \frac{\phi(x,t)}{y - t} \,
    dt,  \label{e:hy} \\
  \Lambda_{y} &=&
    \begin{cases}
      (-1)^{\frac{n-1}{2}} \, (\hilbert_{y} \circ \partial_{y}^{n}), &n=1,3,5,\ldots, \\
      (-1)^{\frac{n}{2}} \, \partial_{y}^{n} &n=2,4,6,\ldots
    \end{cases}  \label{e:lambda}
\end{eqnarray}
In Equations~\eqref{e:hy} and~\eqref{e:lambda} the symbol~$\partial_{y}$ denotes partial differentiation and~$\pv$ stands for Cauchy principal value.
%\input{results}
% Last edit: Sunday, September 27, 2009  13:44
% Status: needs proofreading

\section{Main results}
\label{s:results}

Our main result---the inversion of the spherical mean transform~$\sonar$---follows from Theorem~\ref{t:ip} stated below.

\begin{theorem} \label{t:ip}
The following inversion formula holds:
\begin{equation} \label{e:ip}
  F = \pi^{-n} \, (\norton^{*} \circ \Lambda_{y} \circ \norton)(F), \quad F \in \sr{n+1}.
\end{equation}
\end{theorem}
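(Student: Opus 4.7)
The plan is to invert $\norton$ on the Fourier side, exploiting that $\norton$ is a convolution operator. Let $\mu$ denote the push-forward of Lebesgue measure on $\r{n}$ under the map $v \mapsto (v,|v|^{2})$; then, after the change of variables $v = -u$, Equation~\eqref{e:p} reads $\norton F = F * \mu$, and by Equation~\eqref{e:p*}, $\norton^{*}$ is convolution with the reflected measure $\check\mu$. In the Fourier variables $(\xi,\eta)$ dual to $(x,y) \in \r{n}\times\r{1}$, the symbol of $\norton$ is therefore
\[
  \hat\mu(\xi,\eta) = \int_{\r{n}} e^{-i(v\cdot\xi + |v|^{2}\eta)}\,dv,
\]
and the symbol of $\norton^{*}$ is the complex conjugate $\overline{\hat\mu}$.

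I would evaluate $\hat\mu$ by Fresnel's formula: regularize by inserting the Gaussian $e^{-\epsilon|v|^{2}}$, complete the square in $v$, apply the one-dimensional identity $\int e^{-i\eta t^{2}}\,dt = \sqrt{\pi/|\eta|}\,e^{-i\pi\sgn(\eta)/4}$ in each coordinate, and pass to the limit $\epsilon \to 0^{+}$. The result, valid for $\eta \neq 0$, is
\[
  \hat\mu(\xi,\eta) = \left(\pi/|\eta|\right)^{n/2}
  \exp\!\left(i|\xi|^{2}/(4\eta) - i\pi n\sgn(\eta)/4\right).
\]
The crucial feature is that the modulus $|\hat\mu(\xi,\eta)|^{2} = (\pi/|\eta|)^{n}$ depends only on $\eta$; this isotropy in $\xi$ is precisely what allows a purely one-dimensional correction operator to recover $F$ from $\norton F$.

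Next I would verify that the Fourier symbol of $\Lambda_{y}$ (acting only in $y$) is $|\eta|^{n}$. Using $\widehat{\hilbert_{y}\phi}(\eta) = -i\sgn(\eta)\,\hat\phi(\eta)$ and $\widehat{\partial_{y}\phi}(\eta) = i\eta\,\hat\phi(\eta)$, a short parity case analysis on the two branches of Equation~\eqref{e:lambda} confirms the claim in both the odd and even $n$ cases. Multiplying the three symbols in the composition $\norton^{*}\circ\Lambda_{y}\circ\norton$ then gives
\[
  \overline{\hat\mu(\xi,\eta)} \cdot |\eta|^{n} \cdot \hat\mu(\xi,\eta)
  = (\pi/|\eta|)^{n} \cdot |\eta|^{n} = \pi^{n},
\]
so $\norton^{*}\circ\Lambda_{y}\circ\norton = \pi^{n}\,\id$ on $\sr{n+1}$, which is Equation~\eqref{e:ip}.

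The main technical obstacle is rigor: $\hat\mu$ has a $|\eta|^{-n/2}$ singularity at $\eta = 0$ and exists only as a tempered distribution, so the intermediate products $\hat\mu\,\hat F$ and $|\eta|^{n}\hat\mu\,\hat F$ require careful interpretation. I would handle this by carrying out the whole computation with the regularized symbols---every intermediate product is then a bounded continuous function against a Schwartz function---and then pass to the limit $\epsilon \to 0^{+}$ using dominated convergence in the Fourier inversion integral that reconstructs $F$. A secondary technical point is checking that $\norton F$ is itself a tempered function for $F \in \sr{n+1}$, which follows from a routine stationary-phase-type estimate on the paraboloid average of a Schwartz function.
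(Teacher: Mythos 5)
Your computation coincides, almost step for step, with the paper's own Fourier-analytic derivation of Equation~\eqref{e:ip} in Section~\ref{s:fourier}: the same regularization $e^{-\epsilon|v|^{2}}$, the same symbol $\widehat{T}(\xi,\eta)=(\pi/|\eta|)^{n/2}\exp\bigl(i(|\xi|^{2}-\pi n|\eta|)/(4\eta)\bigr)$ (your $-i\pi n\sgn(\eta)/4$ is the same phase), the same identification of $\overline{\widehat{T}}$ with $\norton^{*}$ and of $|\eta|^{n}$ with $\Lambda_{y}$, and the same cancellation $\overline{\widehat{T}}\,|\eta|^{n}\,\widehat{T}=\pi^{n}$. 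The only cosmetic difference is that you evaluate the Gaussian--Fresnel integral coordinatewise, while the paper passes to polar coordinates and quotes Gradshteyn--Ryzhik (Lemma~\ref{l:phat}).

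The substantive point is that the paper explicitly does \emph{not} regard this computation as the proof: it calls it a derivation and defers the ``final proof of the validity'' to Section~\ref{s:radon}, where Theorem~\ref{t:validation} verifies \eqref{e:ip} directly against the classical Radon inversion formula via the change of variables $w=v+u$, $z=v-u$ (which turns the iterated paraboloid integrals into hyperplane integrals) together with Lemma~\ref{l:pullback}. The reason is exactly the obstacle you flag at the end but do not resolve. For $F\in\sr{n+1}$ the function $G=\norton(F)$ is bounded but \emph{not} integrable over $\r{n+1}$ (it decays only like $|v|^{-1}$ along the paraboloids, cf. Lemma~\ref{l:asymptotic}), so $\widehat{G}=\widehat{T}\widehat{F}$ holds only distributionally; worse, Remark~\ref{rem:asymptotic} shows that $\norton^{*}\circ\norton$ already diverges pointwise on nonnegative bump functions, so the classical convergence of the outer integral in $\norton^{*}(\Lambda_{y}(\norton F))$ is genuinely in question and depends on the extra decay produced by $\Lambda_{y}$. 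Your dominated-convergence argument justifies that the \emph{regularized symbol product} tends to $\pi^{n}\widehat{F}$, but it does not identify that limit with the actual left-to-right composition of the three operators, each applied classically in the stated order; that identification is the whole content of the theorem, and it is what the Radon-theoretic argument (or, alternatively, a direct estimate showing $\Lambda_{y}(\norton F)(x_{0}+v,y_{0}+|v|^{2})$ is integrable in $v$) supplies. So: correct as a formal derivation matching the paper's Section~\ref{s:fourier}, but with the same gap the paper itself acknowledges and closes by a genuinely different route.
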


Equation~\eqref{e:ip} will be derived in Section~\ref{s:fourier} using Fourier analysis and the theory of distributions.  However, the final proof of the validity of Theorem~\ref{t:ip} will be deferred until Section~\ref{s:radon} where it will follow from the classical Radon inversion formula.  Meanwhile, we state and prove our main result.

\begin{theorem} \label{t:is}
The following inversion formula holds:
\begin{equation} \label{e:is}
  f = y \, (\sonar^{*} \circ \k_{y} \circ \sonar)(f), \quad f \in \sh{n+1},
\end{equation}
where the one-dimensional operator~$\k_{y}$ (acting on the $y$-variable) is defined by
\begin{equation} \label{e:ky}
  \k_{y}(\phi)(y) =  \frac{|S^{n}|}{2 \, \pi^{n}} \,
  \Lambda_{t} \left( t^{\frac{n-1}{2}} \, \phi(\sqrt{t}) \right) \bigg|_{t = y^{2}}
\end{equation}
\end{theorem}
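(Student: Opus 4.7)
The strategy is to deduce Theorem~\ref{t:is} as a direct consequence of Theorem~\ref{t:ip}, transcribing the convolution identity from the uppercase setting of $\norton$ back to the lowercase setting of $\sonar$ through the intertwining mappings $\n$ and $\m$. Since the commutative diagram~\eqref{e:cd} gives $\norton \circ \n = \m \circ \sonar$, inversion of $\sonar$ reduces to inversion of $\norton$ via~\eqref{e:is_norton}; what remains is to check that the explicit right-hand side of~\eqref{e:ip} translates term by term to the right-hand side of~\eqref{e:is}.

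First I would substitute $F = \n(f)$ into~\eqref{e:ip}, use $\norton \n(f) = \m \sonar(f)$, and apply $\n^{-1}$ via~\eqref{e:in} to obtain
\begin{equation*}
  f(x,y) \;=\; \pi^{-n} \, y \cdot \bigl( \norton^* \, \Lambda_y \, \m \sonar(f) \bigr)(x, y^2).
\end{equation*}
Expanding $\norton^*$ with~\eqref{e:p*} then yields
\begin{equation*}
  f(x,y) \;=\; \pi^{-n} \, y \int_{\r{n}} \bigl( \Lambda_y \m \sonar(f) \bigr)\bigl( x+u,\; y^2 + |u|^2 \bigr) \, du.
\end{equation*}
Because $y^2 + |u|^2 = \bigl(\sqrt{y^2+|u|^2}\bigr)^2$, setting $h(z,\tau) := \bigl( \Lambda_y \m \sonar(f) \bigr)(z, \tau^2)$ identifies the integral as $\sonar^*(h)(x,y)$ via~\eqref{e:s*}.

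It then remains to recognize $h$ as $\pi^n \, \k_y \sonar(f)$. By~\eqref{e:m}, $(\m \sonar(f))(x, s) = \tfrac{1}{2}|S^n|\, s^{(n-1)/2}\, \sonar(f)(x, \sqrt{s})$ for $s > 0$ and vanishes otherwise; applying $\Lambda_s$ in the second variable and restricting to $s = \tau^2$ produces $\tfrac{1}{2}|S^n|\, \Lambda_t\bigl( t^{(n-1)/2} \sonar(f)(x, \sqrt{t}) \bigr)\big|_{t = \tau^2}$, which matches the definition~\eqref{e:ky} of $\k_y$ applied to $\sonar(f)(x, \cdot)$ up to the factor $\pi^n$. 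The factors $\pi^n$ and $\pi^{-n}$ cancel and~\eqref{e:is} follows. The main obstacle I expect is bookkeeping: tracking the change of variables $s \leftrightarrow \tau^2$ alongside the weight $s^{(n-1)/2}$ and the constant $|S^n|/2$ so that they combine correctly with the $\pi^{-n}$ from Theorem~\ref{t:ip}; the potential concern that the nonlocal $\Lambda_y$ (for odd $n$) acts on a function extended by zero is automatically handled because $\m \sonar(f) = \norton \n(f)$ belongs to the class for which~\eqref{e:ip} has already been verified.
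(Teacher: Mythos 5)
Your proposal is correct and takes essentially the same route as the paper: both deduce the result from Theorem~\ref{t:ip} via the intertwining $\norton \circ \n = \m \circ \sonar$ and then identify the remaining one-dimensional factor with $\k_{y}$, cancelling the $\pi^{-n}$ against the $|S^{n}|/(2\pi^{n})$ in~\eqref{e:ky}. The only difference is presentational: the paper packages your substitution $y^{2}+|u|^{2}=\bigl(\sqrt{y^{2}+|u|^{2}}\bigr)^{2}$ into an auxiliary mapping $\l$ and the intertwining relation $y\,\sonar^{*}=\n^{-1}\circ\norton^{*}\circ\l$, whereas you carry out the same bookkeeping pointwise, and both arguments dispose of the half-space truncation issue in the same way.
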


\begin{proof}
Combining Equations~\eqref{e:is_norton} and~\eqref{e:ip}, we obtain
\[
  f = \pi^{-n} \, (\n^{-1} \circ \norton^{*} \circ \Lambda_{y} \circ \m \circ \sonar)(f).
\]
We now define the mapping~$\l$ as a simple modification of Equation~\eqref{e:n}:
\begin{equation} \label{e:l}
    \l(f)(x,y) =
    \begin{cases}
      f(x,\sqrt{y}), &y > 0, \\
      0 &y \leq 0.
    \end{cases}
\end{equation}
Notice that the inverse mapping~$\l^{-1}$ is given by
\begin{equation} \label{e:il}
  f(x,y) = F(x,y^{2}) \mid_{y>0},
\end{equation}
which is analogous to Equation~\eqref{e:in}.

From Equations~\eqref{e:p*}, \eqref{e:s*}, and \eqref{e:l} follows the intertwining relation
\begin{equation} \label{e:sp*}
  y \, \sonar^{*} = \n^{-1} \circ \norton^{*} \circ \l.
\end{equation}
To apply Equation~\eqref{e:sp*}, we insert the composition \( \l \circ \l^{-1} \) into Equation~\eqref{e:is} as follows:
\begin{equation} \label{e:is_modified}
  f = \pi^{-n} \, (\n^{-1} \circ \norton^{*} \circ \l
  \circ \l^{-1} \circ \Lambda_{y} \circ \m \circ \sonar)(f).
\end{equation}
Although \( \l \circ \l^{-1} \) does not act as an identity, the validity of~\eqref{e:is} is unaffected because the values of~$\norton^{*}(F)(x,y)$ for \( y > 0 \) depend only on the restriction of~$F(x,y)$ to the upper half-space.  Using the intertwining relation~\eqref{e:sp*}, we replace the triple composition
\(
  (\n^{-1} \circ \norton^{*} \circ \l)
\)
with \( y \, \sonar^{*}. \)  Finally, by explicitly rewriting the composition \( (\l^{-1} \circ \Lambda_{y} \circ \m), \) using Equations~\eqref{e:l} and~\eqref{e:m}, we are led to the operator~$\k_{y}$ defined by Equation~\eqref{e:ky}.
\end{proof}

Given the practical importance of spherical means in low dimensions, we conclude this section with explicit inversion formulas for~$\sonar$ in dimensions two and three.

\begin{corollary} \label{c:is23}
Let \( g = \sonar(f) \) where~$f$ is rapidly decreasing.  The following inversion formula holds in~$\h{2}$:
\begin{equation} \label{e:is2}
  f(x,y) = \frac{y}{\pi} \,
  \int_{-\infty}^{\infty}
  \left[
    \pv \,
    \int_{0}^{\infty}
      \frac{g'_{y}(x + u,s)}{y^{2} + u^{2} - s^{2}} \,
    ds
  \right] \,
  du,
\end{equation}
where the bracketed integral is interpreted in the sense of Cauchy principal value.

In~$\h{3}$ the analogue of Equation~\eqref{e:is2} can be written as
\begin{equation} \label{e:is3}
  \begin{split}
    f(x,y) &= \frac{y}{2 \pi} \,
      \int_{\r{2}}
      \left(
        \frac{g(x+u,\sqrt{y^{2} + |u|^{2}})}{\left( y^{2} + |u|^{2} \right)^{\frac{3}{2}}}
      \right.
      \\&-
        \frac{g'_{y}(x+u,\sqrt{y^{2} + |u|^{2}})}{y^{2} + |u|^{2}}
      \\&-
      \left.
        \frac{g''_{yy}(x+u,\sqrt{y^{2} + |u|^{2}})}{\sqrt{y^{2} + |u|^{2}}}
      \right)  \,
    du,
  \end{split}
\end{equation}
where now \( x = (x_{1},x_{2}). \)
\end{corollary}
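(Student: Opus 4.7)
The plan is to specialize Theorem~\ref{t:is} to the cases $n=1$ and $n=2$; in each case this reduces to an explicit evaluation of the one-dimensional operator $\k_{y}$ from Equation~\eqref{e:ky} followed by substitution into $f = y \, (\sonar^{*} \circ \k_{y} \circ \sonar)(f)$ and an application of the adjoint formula~\eqref{e:s*}. No new machinery is required beyond the chain and product rules and a careful change of variable in the principal value integral.

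For $n=1$ (the $\h{2}$ case), I substitute $|S^{1}| = 2\pi$ and $\Lambda_{t} = \hilbert_{t} \circ \partial_{t}$ into~\eqref{e:ky}, noting that the weight $t^{(n-1)/2}$ collapses to $1$. The chain rule gives $\partial_{t}[\phi(\sqrt{t})] = \phi'(\sqrt{t})/(2\sqrt{t})$ for $t>0$, so $\k_{y}(\phi)(y) = \hilbert_{t}\bigl[\phi'(\sqrt{t})/(2\sqrt{t})\bigr]\big|_{t=y^{2}}$. Because the argument is extended by $0$ on the negative real axis in accordance with the conventions of Equations~\eqref{e:n}--\eqref{e:m}, the principal value integral runs effectively over $t>0$, and the substitution $t=s^{2}$, $dt = 2s\,ds$, cancels the Jacobian against the factor $1/(2\sqrt{t})$, leaving $\tfrac{1}{\pi}\, \pv \int_{0}^{\infty} \phi'(s)/(y^{2}-s^{2})\,ds$. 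Composing with the one-dimensional adjoint $\sonar^{*}$ from~\eqref{e:s*} and multiplying by $y$ yields Equation~\eqref{e:is2}.

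For $n=2$ (the $\h{3}$ case), I substitute $|S^{2}| = 4\pi$ and $\Lambda_{t} = -\partial_{t}^{2}$, and note that $t^{(n-1)/2} = \sqrt{t}$, so that
\[
  \k_{y}(\phi)(y) = -\tfrac{2}{\pi}\,\partial_{t}^{2}\bigl[\sqrt{t}\,\phi(\sqrt{t})\bigr]\big|_{t=y^{2}}.
\]
Two applications of the product and chain rules produce an expression of the form $(2\pi)^{-1}\bigl(\phi(y)/y^{3} - \phi'(y)/y^{2} - \phi''(y)/y\bigr)$ at $t=y^{2}$. Inserting this into the two-dimensional adjoint~\eqref{e:s*}, which is a planar integration in $u \in \r{2}$ against $g(x+u,\sqrt{y^{2}+|u|^{2}})$ and its first two $y$-derivatives, and multiplying by $y$, reproduces Equation~\eqref{e:is3} term by term.

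Both computations are essentially routine bookkeeping; the only genuinely delicate point is the change of variable in the Hilbert transform for $n=1$, since the integrand $\psi(t) := \phi'(\sqrt{t})/(2\sqrt{t})$ is only defined on $t>0$ and the Cauchy principal value must be tracked through the substitution $t=s^{2}$ to confirm that the resulting integral over $(0,\infty)$ inherits the correct principal value interpretation at $s=y$. Once this is verified, both formulas follow directly.
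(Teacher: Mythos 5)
Your proposal is correct and follows essentially the same route as the paper: the paper's proof unfolds the composition $y\,(\sonar^{*}\circ\k_{y}\circ\sonar)$ step by step through $G=\m(g)$, $\hilbert_{y}\partial_{y}G$, $\norton^{*}$ and $\n^{-1}$, which produces exactly the same integrals as your closed-form evaluation of $\k_{y}$ followed by $\sonar^{*}$ (and your intermediate expressions, e.g. $\k_{y}(\phi)(y)=(2\pi)^{-1}\bigl(\phi(y)/y^{3}-\phi'(y)/y^{2}-\phi''(y)/y\bigr)$ for $n=2$ and the substitution $t=s^{2}$ in the principal value integral for $n=1$, check out against the paper's computation).
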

\begin{proof}
Differentiating Equation~\eqref{e:m} (with $n=2$) with respect to the $y$-variable, we obtain
\begin{displaymath}
  (\partial_{y}G)(x,y) =
    G_{y}(x,y) =
    \begin{cases}
      \pi \, g_{y}(x,\sqrt{y}) \, \frac{1}{2 \, \sqrt{y}}, &y > 0, \\
      0, &y \leq 0.
    \end{cases}
\end{displaymath}
Now the successive application of Hilbert transform~\eqref{e:hy} leads to
\begin{displaymath}
    (\hilbert_{y} G_{y}) (x,y)
  =
    \pv \int\limits_{0}^{\infty}
      \frac{g_{y}(x,\sqrt{t})}{y - t} \,
      \frac{dt}{2 \,\sqrt{t}} =
    \pv \int\limits_{0}^{\infty}
      \frac{g_{y}(x,s)}{y - s^{2}} \,
      ds.
\end{displaymath}
Finally, applying the adjoint~$\norton^{*}$ (see Equation~\eqref{e:p*}), we get the following explicit relation between~$F$ and~$g$
\begin{displaymath}
  F(x,y) =
  \frac{1}{\pi} \,
  \int\limits_{-\infty}^{\infty}
  \left[
    \pv \int\limits_{0}^{\infty}
      \frac{g_{y}(x+v,s)}{y + v^{2} - s^{2}} \,
      ds
  \right] \,
  dv,
\end{displaymath}
from which the inversion formula~\eqref{e:is2} results after the substitution
\begin{displaymath}
  f(x,y) = y \, F(x,y^{2}) \mid_{y>0},
\end{displaymath}
in accordance with Equation~\eqref{e:in}.

The argument leading to Equation~\eqref{e:is3} is completely analogous and will therefore be omitted.  The corollary is proved.
\end{proof}
\section{Mapping properties of~$\norton$}
\label{s:range}

In this section we examine the mapping properties of the convolution operator~$\norton$ on the space of smooth compactly supported functions.  The lemmas below do not fully characterize the range of~$\norton$, yet they establish some useful facts that will guide us to the correct expression for~$\norton^{-1}$ in Sections~\ref{s:fourier} and~\ref{s:radon}.  We begin with a simple asymptotic estimate.

\begin{lemma} \label{l:asymptotic}
Let~$F(x,y)$ be a smooth compactly supported function in~$\r{n+1}$ $(n \geq 1)$ with support lying in the upper half-space $\h{n+1}$.  Fix \( (x_{0},y_{0}) \in \h{n+1} \) and consider
\[
  G(v) =
  \left|
    \norton(F)(x_{0} + v, y_{0} + |v|^{2})
  \right|,
\]
as a function of \( v \in \r{n}. \)  There exist~$r > 0$ and~$h > 0$ (possibly dependent on~$(x_{0},y_{0})$) such that
\begin{equation} \label{e:asymptotic}
  \lim_{|v| \to \infty} |v| \, G(v) \leq
  \frac{\max |F|}{2} \, |B^{n-1}| \, r^{n-1} \, h,
\end{equation}
where~$|B^{n}|$ denotes the volume of a unit ball in~$\r{n}$ \( (|B^{0}| = 1). \)
\end{lemma}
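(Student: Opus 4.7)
The plan is to exploit a geometric observation: every paraboloid used in the definition of $\norton$ (see Equation~\eqref{e:p}) passing through $(x_0 + v,\, y_0 + |v|^2)$ also passes through the fixed point $(x_0, y_0)$, corresponding to $u = -v$. Accordingly, I will substitute $u = w - v$ in \eqref{e:p} to recenter the integral at the fixed point:
\[
\norton(F)(x_0 + v,\, y_0 + |v|^2) \;=\; \int_{\r{n}} F\bigl(x_0 + w,\ y_0 + 2\langle v, w \rangle - |w|^2\bigr)\, dw.
\]
Because $F$ is compactly supported, the contributing $w$'s lie in a shrinking neighbourhood of the origin as $|v| \to \infty$, and the estimate will follow from computing the volume of this neighbourhood.

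Concretely, I will fix $r, h > 0$ large enough that $\supp F$ lies in the slab $\{(x,y) : |x - x_0| \leq r,\ y_- \leq y \leq y_+\}$ with $h = y_+ - y_-$; the dependence of $r$ on $x_0$ is what accounts for the ``possibly dependent on $(x_0,y_0)$'' clause in the statement. The effective domain $\Omega(v)$ of integration is then cut out by $|w| \leq r$ (from the $x$-support) together with $2\langle v, w \rangle - |w|^2 \in [y_- - y_0,\, y_+ - y_0]$ (from the $y$-support). Writing $\hat v = v/|v|$ and decomposing $w = t \hat v + w'$ with $w' \perp \hat v$, the $y$-constraint reduces to the quadratic equation $t^2 - 2|v|\, t + (|w'|^2 + c) = 0$ with $c$ ranging over an interval of length $h$.

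I will then observe that, for $|v| > r$, the root $t = |v| + \sqrt{|v|^2 - c - |w'|^2}$ violates $|t| \leq |w| \leq r$, so only the smaller root $t = |v| - \sqrt{|v|^2 - c - |w'|^2}$ remains admissible; a routine expansion shows that this admissible $t$ ranges over an interval of length $h/(2|v|) + o(1/|v|)$. An application of Fubini's theorem over the $(n-1)$-dimensional disk $\{|w'| \leq r\}$ then bounds $\vol(\Omega(v))$ by $|B^{n-1}|\, r^{n-1} \cdot h/(2|v|) + o(1/|v|)$, and the pointwise estimate $G(v) \leq \max|F| \cdot \vol(\Omega(v))$ yields \eqref{e:asymptotic}. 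The only step requiring care is the branch elimination for $t$ and the verification that $|v|\,t^2 \to 0$ in the expansion; once those are in place, the remainder is a routine volume computation.
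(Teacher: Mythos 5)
Your argument is correct, and it takes a genuinely different route from the paper's. The paper works with the paraboloid in its original parametrization, projects $C \cap P$ (cylinder meets paraboloid) orthogonally onto $\r{n}$, and computes $\vol(\pi(C\cap P))$ \emph{exactly} as a difference of lens-shaped intersections of balls via the function $U(s)=|B^{n-1}|\int_s^1(1-t^2)^{(n-1)/2}\,dt$; the asymptotics then require expanding $U$ near $s=1$ and observing a cancellation between two $\bigo(1)\cdot|v|$ contributions ($L_2=L_3$). Your substitution $u=w-v$, which recenters the integral at the fixed point $(x_0,y_0)$ through which every such paraboloid passes, replaces all of that with a one-dimensional slicing argument: the admissible set is a thin slab of thickness $h/(2|v|)+o(1/|v|)$ in the $\hat v$ direction inside the ball $\{|w|\le r\}$, and Fubini over the orthogonal disk gives the volume bound at once. (One quibble with the prose: the contributing $w$'s do not lie in a shrinking neighbourhood of the origin --- the slab has diameter comparable to $2r$ in the $w'$ directions --- but its volume shrinks, which is all your computation uses.) Your route is more elementary, treats $n=1$ and $n>1$ uniformly, and delivers the inequality \eqref{e:asymptotic} directly as a $\limsup$ bound without the $L_2=L_3$ cancellation; what it gives up is the exact evaluation of the limit, which the paper's computation provides (the limit equals the right-hand side of \eqref{e:asymptotic}), though nothing in the lemma's statement or its later use in Remark~\ref{rem:asymptotic} requires more than the upper bound together with the (easy) corresponding lower bound. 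The branch elimination you flag is sound: the larger root $t=|v|+\sqrt{|v|^2-c-|w'|^2}$ exceeds $|v|>r\ge|t|$ once $|v|>r$, and on the admissible branch $t=\bigo(1/|v|)$, so $|v|\,t^2\to0$ as needed.
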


\begin{proof}
It is instructive to first prove the statement of the lemma for \( n = 1. \)
\par
Since~$F$ is compactly supported, we can choose~$r>0$ and~$h>0$ so that~$\supp(F)$ is entirely contained within the rectangle
\(
  [x_{0}-r,x_{0}+r] \times [0,h].
\)

\begin{figure}[ht!]
  \centering
  \scalebox{1}{\includegraphics{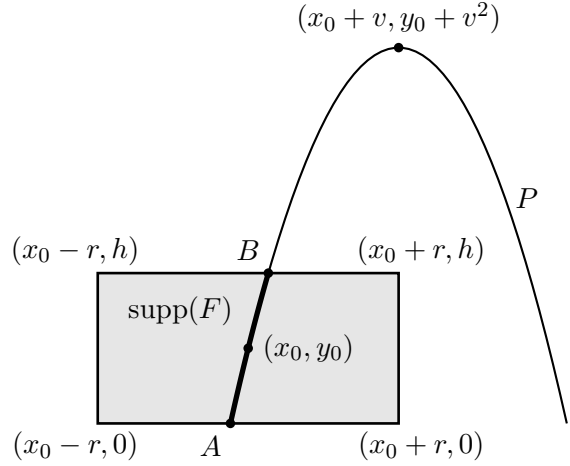}}
  \caption{Estimation of~$G(v)$ in two dimensions}
  \label{fig:asymptotic2d}
\end{figure}

As~$v$ goes to infinity, the parabola
\[
  P = \{ (x_{0} + v + u, y_{0} + v^{2} - u^{2}) \mid u \in \r{1} \},
\]
passing through~$(x_{0},y_{0})$, will intersect only the horizontal sides of the rectangle at points $A(\sqrt{y_{0} + v^{2}},0)$ and $B(\sqrt{y_{0} + v^{2}-h},h)$, as shown in Figure~\ref{fig:asymptotic2d}.  Therefore
\[
  \begin{split}
  G(v) &=
    \left|
      \int_{-\infty}^{+\infty}
        F(x_{0} + v + u, y_{0} + v^{2} - u^{2}) \,
      du
    \right|\\
  &=
    \left|
      \int_{-\sqrt{y_{0} + v^{2}}}^{-\sqrt{y_{0} + v^{2}-h}}
        F(x_{0} + v + u, y_{0} + v^{2} - u^{2}) \,
      du
    \right|\\
  &\leq
  \max |F| \,
  \left(
    \sqrt{y_{0} + v^{2}} - \sqrt{y_{0} + v^{2}-h}
  \right).
  \end{split}
\]
It is now a simple matter to compute
\[
   \lim_{v \to \infty} v \, G(v) = \frac{1}{2} \, \max |F| \, h.
\]
Notice that this is in agreement with Equation~\eqref{e:asymptotic} for \( n = 1. \)
\par
We now turn our attention to the case \( n > 1. \)  Our strategy is going to be the same as in the preceding case, except that now the support of~$F$ will be confined to the interior of the cylinder
\(
  C = B^{n}(x_{0},r) \times [0,h]
\)
with sufficiently large dimensions.  In order to visualize the intersection of~$C$ with the paraboloid
\[
  P = \{ (x_{0} + v + u, y_{0} + |v|^{2} - |u|^{2}) \mid u \in \r{n} \}
\]
we project both orthogonally onto~$\r{n}$ using the mapping
\(
  \pi: (x,y) \in \h{n+1} \mapsto x \in \r{n}.
\)
The result is shown in Figure~\ref{fig:asymptotic} below.

\begin{figure}[ht]
  \centering
  \scalebox{1}{\includegraphics{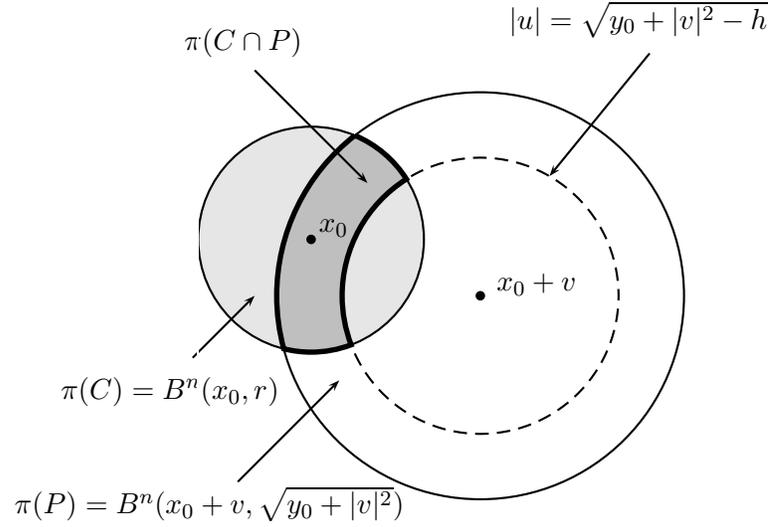}}
  \caption{Estimation of~$G(v)$ in higher dimensions}
  \label{fig:asymptotic}
\end{figure}

The lightly-shaded disk is the projection of the cylinder~$C$. The large solid circle is the intersection of the paraboloid~$P$ with the plane~$y = 0$ containing the base of~$C$.  The dashed circle is the intersection of~$P$ with the plane~$y = h$ containing the top of the cylinder~$C$.  Of primary interest to us is the darkly-shaded quadrilateral bound by four circular arcs (representing spherical surfaces in~$\r{n})$.  This region is the orthogonal projection~$\pi(C \cap P)$ which we now use to derive the following estimate:
\[
  \begin{split}
    G(v)
&=
    \left|
      \int_{\r{n}} F(x_{0} + v + u,y_{0} + |v|^{2} - |u|^{2}) \, du
    \right| \\
&=
    \left|
      \int_{\pi(C \cap P)} F(x_{0} + v + u,y_{0} + |v|^{2} - |u|^{2}) \, du
    \right| \\
&\leq
    \max |F| \, \vol(\pi(C \cap P)).
  \end{split}
\]
To finish the proof we will find the volume of~$\pi(C \cap P)$.  To this end, we express~$\vol(\pi(C \cap P))$ as the difference of volumes of two lens-shaped domains:
\begin{equation} \label{e:volCP}
  \begin{split}
    \vol(\pi(C \cap P))
  &=
      \vol \left( B^{n}(x_{0},r) \cap B^{n}(x_{0}+v,\sqrt{y_{0}+|v|^{2}}) \right) \\
  &-
      \vol \left(B^{n} (x_{0},r) \cap B^{n}(x_{0}+v,\sqrt{y_{0}+|v|^{2}-h}) \right)\\
  &= r^{n} \,
    \left[
      U \left( \frac{r^{2} - y_{0}}{2 \, r \, |v|} \right) -
      U \left( \frac{r^{2} - y_{0} + h}{2 \, r \, |v|} \right)
    \right] \\
  &+
    (y_{0} + |v|^{2})^{\frac{n}{2}} \,
    U \left( \frac{2 \, |v|^{2} + y_{0} - r^{2}}{2 \, |v| \, \sqrt{y_{0} + |v|^{2}}} \right)\\
  &-
    (y_{0} + |v|^{2} - h)^{\frac{n}{2}} \,
    U \left( \frac{2 \, |v|^{2} + y_{0} - r^{2} - h}{2 \, |v| \, \sqrt{y_{0} + |v|^{2} - h}} \right),
  \end{split}
\end{equation}
where
\begin{equation} \label{e:U}
  U(s) = |B^{n-1}| \,
  \int_{s}^{1}
    (1 - t^{2})^{\frac{n-1}{2}} \,
  dt.
\end{equation}
The form of Equation~\eqref{e:volCP} suggests the introduction of the following limits:
\begin{eqnarray*}
  L_{1} &=& r^{n} \, \lim_{|v| \to \infty} |v| \,
    \left[
      U \left( \frac{r^{2} - y_{0}}{2 \, r \, |v|} \right) -
      U \left( \frac{r^{2} - y_{0} + h}{2 \, r \, |v|} \right)
    \right], \\
  L_{2} &=& \lim_{|v| \to \infty} |v| \, (y_{0} + |v|^{2})^{\frac{n}{2}} \,
    U \left( \frac{2 \, |v|^{2} + y_{0} - r^{2}}{2 \, |v| \, \sqrt{y_{0} + |v|^{2}}} \right), \\
  L_{3} &=& \lim_{|v| \to \infty} |v| \,
    (y_{0} + |v|^{2} - h)^{\frac{n}{2}} \,
    U \left( \frac{2 \, |v|^{2} + y_{0} - r^{2} - h}{2 \, |v| \, \sqrt{y_{0} + |v|^{2} - h}} \right).
\end{eqnarray*}
The required limit on the left-hand side of Equation~\eqref{e:asymptotic} is now a simple linear combination of~$L_{1}$, $L_{2}$ and $L_{3}$:
\[
  \lim_{|v| \to \infty} |v| \, G(v) = \max |F| \, (L_{1} + L_{2} - L_{3}).
\]
To find $L_{1}$, substitute \( |v|^{-1} = \epsilon. \)  This leads to
\[
  L_{1} = r^{n} \, \lim_{\epsilon \to 0^{+}}
  \frac{
      U \left( \frac{r^{2} - y_{0}}{2 \, r} \, \epsilon \right) -
      U \left( \frac{r^{2} - y_{0} + h}{2 \, r} \, \epsilon \right)
  }{\epsilon} = \frac{1}{2} \, |B^{n-1}| \, r^{n-1} \, h,
\]
since \( U'(0) = -|B^{n-1}|. \)

Expanding~$U(s)$, given by Equation~\eqref{e:U}, into a generalized series at \( s = 1, \) we obtain
\[
  U(s) = |B^{n-1}| \frac{2^{\frac{n+1}{2}}}{n+1} \, (1 - s)^{\frac{n+1}{2}} +
    \bigo \left( (1 - s)^{\frac{n+3}{2}} \right),
\]
which, together with
\[
  \frac{2 \, |v|^{2} + y_{0} - r^{2}}{2 \, |v| \, \sqrt{y_{0} + |v|^{2}}} =
  1 - \frac{1}{2} \, r^{2} \, |v|^{-2} + \bigo(|v|^{-4}),
\]
leads to the values of $L_{2}$ and $L_{3}$:
\[
  L_{2} = L_{3} =  |B^{n-1}| \, \frac{2^{\frac{n-1}{2}}}{n+1} \, r^{n+1}.
\]
We conclude that, for \( n > 1, \)
\[
    \lim_{|v| \to \infty} |v| \, G(v) = \max |F| \, L_{1} =
    \frac{\max |F|}{2} \, |B^{n-1}| \, r^{n-1} \, h,
\]
as required.
\end{proof}

\begin{remark} \label{rem:asymptotic}
From Lemma~\ref{l:asymptotic} follows that the composition
\(
  (\norton^{*} \circ \norton)
\)
cannot be freely applied to functions in~$\ccr{n+1}$.  Indeed, let \( F \in \ccr{n+1} \) be nonnegative and let \( (x_{0},y_{0}) \in \supp(F). \)  Then
\[
  (\norton^{*} \circ \norton)(F)(x_{0},y_{0}) =
  \int_{\r{n}}
    \norton(F)(x_{0} + v, y_{0} + |v|^{2}) \,
  dv \to \infty,
\]
since
\(
  \norton(F)(x_{0} + v, y_{0} + |v|^{2}) = \bigo(|v|^{-1}).
\)
Accordingly, any inversion formula of the type
\(
  F = (\mathcal{X} \circ \norton^{*} \circ \norton)(F)
\)
requires strong restrictions on the functional domain of~$\norton$ to assure the convergence of~$(\norton^{*} \circ \norton)(F)$.
\par
The above analysis applies, with minor modifications, to the spherical mean transform~$\sonar$.  As Klein shows in~\cite{Klein-2003}, the inversion formulas of the type
\(
  f = (\mathcal{X} \circ \sonar^{*} \circ \sonar)(f),
\)
derived in~\cite{Fawcett-1985} and~\cite{Nessibi-et-al-1995}, can be interpreted conventionally only on a limited subspace of~$\cch{n+1}$.  To avoid convergence issues associated with the \emph{backprojection} operation $\sonar^{*} \circ \sonar$, Klein suggests to precede the adjoint~$\sonar^{*}$ with a derivative~$\partial_{y}$.  Our solution, on the other hand, is to seek an inversion formula of the kind~\( f = (\sonar^{*} \circ K \circ \sonar)(f). \)
\end{remark}

Our next lemma characterizes the shape of the support of \( G = \norton(F); \) It will be particularly useful in Section~\ref{s:radon}.

\begin{lemma} \label{l:compact}
Let
\(
  G = \norton(F)
\)
for some
\(
  F \in \ccr{n+1}
\)
\(
  (n \geq 1)
\)
and let~$\Pi$ be any hyperplane in~$\r{n+1}$ that meets the support of~$G$ in general position.  The intersection
\(
  \supp(G) \cap \Pi
\)
is compact.
\end{lemma}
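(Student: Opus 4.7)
Since $F \in \ccr{n+1}$, one may choose constants $R, M > 0$ such that $\supp(F) \subset \overline{B_{R}(0)} \times [-M, M]$. The plan is to first locate $\supp(G)$ between two paraboloids, then read ``general position'' as non-parallelism to the asymptotic vertical direction, and finally combine these facts via an elementary linear--versus--quadratic growth comparison.

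For the first step, if $(x, y) \in \supp(G)$ then the defining formula~\eqref{e:p} produces some $u \in \r{n}$ with $(x + u, y - |u|^{2}) \in \supp(F)$, which forces $|x + u| \leq R$ and $|y - |u|^{2}| \leq M$. The triangle inequality then gives $(|x| - R)_{+} \leq |u| \leq |x| + R$, so
\[
  (|x| - R)_{+}^{2} - M \;\leq\; y \;\leq\; (|x| + R)^{2} + M.
\]
Thus $\supp(G)$ is trapped inside a paraboloidal shell whose only direction at infinity is vertical.

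The main obstacle is pinning down what ``general position'' means, since $\supp(G)$ is not a smooth submanifold. Under the natural reading---that $\Pi$ is not parallel to the asymptotic vertical direction---a hyperplane $\{(x,y) : a \cdot x + by = c\}$ satisfies the hypothesis exactly when $b \neq 0$, in which case $\Pi$ admits the graph representation $\{(x,y) : y = \alpha \cdot x + \beta\}$ for some $\alpha \in \r{n}$, $\beta \in \r{1}$. A ``vertical'' hyperplane with $b = 0$ really does meet the paraboloidal shell in an unbounded set whenever it meets it at all, so some such restriction is indispensable.

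Given that reading, the conclusion is immediate: for $(x, y) \in \supp(G) \cap \Pi$ the display above combined with $y = \alpha \cdot x + \beta$ yields
\[
  (|x| - R)_{+}^{2} - M \;\leq\; |\alpha| \, |x| + |\beta|,
\]
a quadratic--linear inequality in $|x|$ that forces $|x|$---and hence $y$---to be bounded. Since $\supp(G) \cap \Pi$ is closed in $\r{n+1}$, it is compact.
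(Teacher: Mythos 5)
Your proof is correct, and it takes a genuinely different route from the paper's. The paper covers $\supp(G)$ by the paraboloids $\{(a+v,\,b+|v|^{2})\}$ with vertices $(a,b)$ ranging over $\supp(F)$, projects each intersection $P\cap\Pi$ orthogonally onto $\r{n}$, computes that projection explicitly to be a sphere of radius $r=\sqrt{1+4\theta(p-\omega\cdot a-\theta b)}/(2\theta)$, and then bounds all such radii uniformly by an $r_{\max}$ depending only on $(\omega,\theta)$, so that $\pi(\supp(G)\cap\Pi)$ is covered by balls of fixed radius centered on the compact set $\pi(\supp(F))+\omega/(2\theta)$. You instead sandwich $\supp(G)$ between two paraboloids of revolution and win by a quadratic-versus-linear comparison; this avoids any computation of the intersection geometry, is shorter, and as a bonus handles the horizontal case $\omega=0$ which the paper sets aside under ``general position'' (only $\theta\neq 0$ is actually needed for your argument). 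What the paper's computation buys in exchange is quantitative information (an explicit covering radius), which your envelope argument does not supply. Two small points of care: (i) strictly, the nonvanishing of $G$ at $(x,y)$ gives you the witness $u$, and you should then pass to the closure --- harmless here, since your paraboloidal shell is a closed set containing $\{G\neq 0\}$ and hence $\supp(G)$; (ii) your parenthetical claim that a vertical hyperplane meets the shell in an unbounded set is accurate only for $n\geq 2$ --- for $n=1$ a vertical line meets the shell (and $\supp(G)$) in a bounded set, consistent with the paper's observation that in two dimensions \emph{every} line meeting $\supp(G)$ does so compactly. Neither point affects the validity of the proof.
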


\begin{proof}
Figure \ref{fig:compact} illustrates the lemma in two dimensions \( (n = 1). \)

\begin{figure}[ht]
  \centering
  \scalebox{1}{\includegraphics{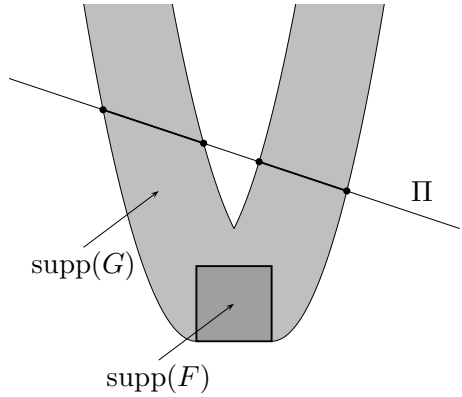}}
  \caption{Intersection $\supp(G) \cap \Pi$ in two dimensions}
  \label{fig:compact}
\end{figure}

The dark square, say, $[-a,a]^{2}$ for some large \( a > 0, \) bounds the support of~$F$:
\(
  \supp(F) \subset [-a,a]^{2}.
\)
The lightly-shaded $U$-shaped region is the infinite union of parabolas covering~$\supp(G)$:
\[
  \supp(G) \subset \bigcup_{(x,y) \in [-a,a]^{2}} \{ (x + v, y + v^{2}) \mid v \in \r{1} \}.
\]
Evidently, the intersection of the $U$-shaped region with a generic line~$\Pi$ is compact.  In fact, in two dimensions, \emph{any} line meeting $\supp(G)$ forms a compact intersection.
\par
We now present a rigorous proof of the general case which we base on Heine-Borel Theorem.
\par
Since both~$\Pi$ and~$\supp(G)$ are relatively closed in~$\r{n+1}$, the intersection \( \supp(G) \cap \Pi \) is closed.  Our task therefore is reduced to showing that \( \supp(G) \cap \Pi \) is bounded.
\par
Let
\(
  \pi: (x,y) \in \h{n+1} \mapsto x \in \r{n}
\)
denote the orthogonal projection, as in Lemma~\ref{l:asymptotic}.  Since the hyperplane~$\Pi$ is in general position, the boundedness of \( \supp(G) \cap \Pi \) implies and is implied by the boundedness of the projection~\( \pi(\supp(G) \cap \Pi). \) We therefore direct our attention to the latter.
\par
To visualize $\pi(\supp(G) \cap \Pi)$ we describe~$\Pi$ explicitly with a standard equation \( \omega \cdot x + \theta \, y = p. \)  Our assumption about~$\Pi$ being in general position allows us to disregard the special cases \( \omega = 0 \) and \( \theta = 0. \) Hence we can normalize \( \omega \in \r{n} \) to have unit length and choose \( \theta \in \r{1} \) to be positive.
\par
Now fix \( (x_{0},y_{0}) \in \supp(F) \) and consider the intersection of~$\Pi$ with the paraboloid of revolution:
\[
  P = \{ (x_{0} + v, y_{0} + |v|^{2}) \mid v \in \r{n} \}.
\]
The projection \( \pi(P \cap \Pi) \) is readily seen to be a sphere with radius
\[
  r = \frac{\sqrt{1 + 4 \, \theta \, (p - \omega \cdot x_{0} - \theta \, y_{0})}}{2 \, \theta}
\]
and center \( x_{0} + \omega / (2 \, \theta) \in \r{n}. \)  Set
\[
  r_{\max} = \frac{1}{2 \theta} \, \sqrt{1 + 4 \, \theta \, \max_{(x,y) \in \supp(F)} |p - \omega \cdot x - \theta \, y|}.
\]
We have the inclusion:
\[
  \pi(P \cap \Pi) \subseteq B^{n}\left( x_{0} + \frac{\omega}{2 \, \theta}, r_{\max} \right).
\]
Since~$\supp(G)$ is covered by paraboloids
\[
  \supp(G) \subseteq \bigcup_{(x,y) \in \supp(F)} \{ (x + v, y + |v|^{2}) \mid v \in \r{n} \},
\]
the projection~$\pi(\supp(G) \cap \Pi)$ is covered by balls:
\[
  \pi(\supp(G) \cap \Pi) \subseteq \bigcup_{x \in \pi(\supp(F))}
  B^{n} \left( x + \frac{\omega}{2 \, \theta}, r_{\max} \right).
\]
We conclude that, since~$\pi(\supp(F))$ is compact and~$r_{\max}$ depends only on~$(\omega,\theta)$, the projection~$\pi(\supp(G) \cap \Pi)$ and, consequently, $\supp(G) \cap \Pi$ must be bounded.  This proves the lemma.
\end{proof}

\begin{remark} \label{rem:compact1}
Figure~\ref{fig:compact} shows that the function \( G(x,y) = \norton(F)(x,y) \) (in two dimensions) is \emph{not} bandlimited in the $y$-variable, as assumed by Norton in~\cite{Norton-1980a}.  Indeed, as a function of the $y$-variable alone, $G(x,y)$ is compactly supported.  Yet, as shown on Page~$360$ in~\cite{Gasquet-Witomski-1999}, a compactly supported function \emph{cannot} be bandlimited for its Fourier transform is analytic and cannot vanish outside a compact set.  Accordingly, in our derivation of the inversion formula for~$\norton$, we avoided any assumptions about the spectrum of~$G(x,y)$.  As a result, our formula for~$\norton^{-1}$ is quite different from Norton's.
\end{remark}

%\input{fourier}
% Last edit: Sunday, September 27, 2009  13:44
% Status: needs proofreading

\section{Fourier inversion of the convolution~$\norton$}
\label{s:fourier}

Our aim in this section is to derive Equation~\eqref{e:ip} which gives the inversion formula for the convolution operator~$\norton$; recall that the latter is defined by Equation~\eqref{e:p} in Section~\ref{s:preliminaries}.  The derivation largely amounts to straightforward de-convolution of Equation~\eqref{e:p} which requires us to introduce Fourier notation and collect a few calculus facts.
\par
Let $\phi(x,y)$ be an integrable function in~$\r{n+1}$.  We define the Fourier transform~$\widehat{\phi}(\xi,\eta)$ by
\begin{equation} \label{e:fourier}
  \widehat{\phi}(\xi,\eta) =
  \int_{\r{n}} \int_{\r{1}}
    \exp(-i \, \xi \cdot x - i \, \eta \, y) \,
    \phi(x,y) \,
  dy \, dx.
\end{equation}

\begin{remark} \label{rem:hat}
The ``hat''-symbol is often used to denote the classical Radon transform which will be introduced in Section~\ref{s:radon}.  To avoid any ambiguity, we henceforth agree to use the ``hat'' only as a shorthand for the classical Fourier transform in~$\r{n+1}$.
\end{remark}

To avoid explicit use of the Fourier inversion formula, we will need to recognize Fourier transforms of the operators introduced in Section~\ref{s:preliminaries}; these we now present below.
\par
As follows from Equation~(33.4) on Page~$313$ in~\cite{Gasquet-Witomski-1999}, with our choice of the constant in front of the integral in Equation~\eqref{e:hy} the Fourier symbol of the Hilbert transform~$\hilbert_{y}$ is given by
\begin{equation} \label{e:hy_hat}
  \widehat{\hilbert_{y}} =
  -i \, \sgn(\eta) =
  \begin{cases}
    -i  & \eta \geq 0, \\
    +i  & \eta < 0,
  \end{cases}
\end{equation}
that is,
\begin{math}
  \widehat{\hilbert_{y}(\phi)} = \widehat{\hilbert_{y}} \, \widehat{\phi} =-i \, \sgn(\eta) \, \widehat{\phi}.
\end{math}

Proposition~(17.2.1) on Page~$157$ in~\cite{Gasquet-Witomski-1999} shows\footnote{Gasquet and Witomski use the symmetric definition of the Fourier transform with a factor of~$2\pi$ in the exponential} that the Fourier symbol of the partial derivative
\begin{math}
  \frac{\partial}{\partial y} = \partial_{y}
\end{math}
is given by
\begin{math}
  \widehat{\partial_{y}} = i \, \eta;
\end{math}
(Of course, this simple result can be easily derived from scratch using integration by parts.)  Together with Equation~\eqref{e:hy_hat}, the formula for~$\widehat{\partial_{y}}$ leads to the Fourier symbol of the $\Lambda$-operator:
\begin{equation} \label{e:lambda_hat}
  \widehat{\Lambda_{y}} = |\eta|^{n}, \quad n = 1,2,3,\ldots
\end{equation}
The latter is a familiar fact in signal processing and tomography.
\par
It now remains to establish some calculus facts which we prefer to keep separate from Fourier manipulations.  We shall require the following integrals found in~\cite{Gradshteyn-Ryzhik-1980} (Page~$490$ Equations~(5) and~(6)):
\begin{eqnarray}
  \int\limits_{0}^{\infty}
    x^{\mu-1} \, \exp(-\beta \, x) \, \sin(\delta x) \,
  dx
    &=&
  \frac{\Gamma(\mu) \, \sin \left( \mu \, \arctan \frac{\delta}{\beta} \right)}{(\beta^{2} + \delta^{2})^{\frac{\mu}{2}}}, \label{e:gr1} \\
  \int\limits_{0}^{\infty}
    x^{\mu-1} \, \exp(-\beta \, x) \, \cos(\delta x) \,
  dx
    &=&
  \frac{\Gamma(\mu) \, \cos \left( \mu \, \arctan \frac{\delta}{\beta} \right)}{(\beta^{2} + \delta^{2})^{\frac{\mu}{2}}}. \label{e:gr2}
\end{eqnarray}
Integrals~\eqref{e:gr1} and~\eqref{e:gr2} converge for
\begin{math}
  \Re(\mu) > 0,
\end{math}
\begin{math}
  \Re(\beta) > |\Im(\delta)|
\end{math}
and are needed in the proof of Lemma~\ref{l:phat} below.
\begin{lemma} \label{l:phat}
For any
\begin{math}
  \xi \in \r{n}
\end{math}
and
\begin{math}
  \eta \neq 0,
\end{math}
\begin{equation} \label{e:phat}
  \begin{split}
    \lim_{\epsilon \to 0^{+}}
  &
    \int_{\r{n}}
      \exp \left( i \, \xi \cdot x - ( \epsilon + i \, \eta) \, |x|^{2} \right) \,
    dx \\
  &=
    \left( \frac{\pi}{|\eta|} \right)^{\frac{n}{2}} \,
    \exp \left( i \, \frac{|\xi|^{2} - \pi \, n \, |\eta|}{4 \, \eta} \right).
  \end{split}
\end{equation}
\end{lemma}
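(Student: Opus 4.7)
The plan is to introduce the Gaussian damping $\epsilon>0$ as a regularizer, factor the integral into one-dimensional Gaussian integrals with complex coefficient $\beta = \epsilon+i\eta$, evaluate each factor in closed form via the Gradshteyn-Ryzhik integrals \eqref{e:gr1}--\eqref{e:gr2}, and then take $\epsilon\to 0^{+}$ while keeping careful track of the branch of the square root.

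First, since the integrand factorizes in the coordinates of $x=(x_{1},\ldots,x_{n})$, the integral equals $\prod_{j=1}^{n} J_{j}(\epsilon)$ with $J_{j}(\epsilon) = \int_{-\infty}^{+\infty}\exp(i\xi_{j}x_{j}-(\epsilon+i\eta)x_{j}^{2})\,dx_{j}$. Completing the square in each $x_{j}$ and shifting the contour in $\mathbb{C}$ by $-i\xi_{j}/(2(\epsilon+i\eta))$---which is legal for $\epsilon>0$ by Cauchy's theorem, since $|\exp(-(\epsilon+i\eta)z^{2})|$ decays like $\exp(-\epsilon(\Re z)^{2})$ uniformly in any horizontal strip---yields
\[
  J_{j}(\epsilon) = \exp\!\left( -\frac{\xi_{j}^{2}}{4(\epsilon+i\eta)} \right) I(\epsilon), \qquad I(\epsilon) = \int_{-\infty}^{+\infty}\exp(-(\epsilon+i\eta)t^{2})\,dt.
\]
Multiplying the $n$ factors produces $I(\epsilon)^{n}\exp(-|\xi|^{2}/(4(\epsilon+i\eta)))$.

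Next I would compute $I(\epsilon)$ by splitting $\exp(-i\eta t^{2}) = \cos(\eta t^{2}) - i\sin(\eta t^{2})$, passing to $(0,\infty)$ by evenness, and substituting $u=t^{2}$. The two resulting integrals fit \eqref{e:gr1}--\eqref{e:gr2} with $\mu = 1/2$, $\beta = \epsilon$, $\delta = \eta$ (the convergence hypotheses $\Re\mu>0$ and $\Re\beta > |\Im\delta|$ hold because $\epsilon>0$); using $\Gamma(1/2)=\sqrt{\pi}$ one obtains
\[
  I(\epsilon) = \sqrt{\pi}\,(\epsilon^{2}+\eta^{2})^{-1/4}\,\exp\!\left( -\tfrac{i}{2}\arctan(\eta/\epsilon) \right).
\]
As $\epsilon\to 0^{+}$, $(\epsilon^{2}+\eta^{2})^{-1/4}\to |\eta|^{-1/2}$ and $\arctan(\eta/\epsilon)\to (\pi/2)\sgn(\eta)$, so $I(\epsilon)^{n}\to (\pi/|\eta|)^{n/2}\exp(-i\pi n|\eta|/(4\eta))$ after using $\sgn(\eta)=|\eta|/\eta$; meanwhile $\exp(-|\xi|^{2}/(4(\epsilon+i\eta)))\to \exp(i|\xi|^{2}/(4\eta))$. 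Combining the two limits produces the right-hand side of \eqref{e:phat}.

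The main obstacle I expect is the bookkeeping of the square-root branch. Integrals \eqref{e:gr1}--\eqref{e:gr2} already encode the principal branch through the polar form $(\epsilon^{2}+\eta^{2})^{-1/4}\exp(-i\arctan(\eta/\epsilon)/2)$, but one must verify that the limit of this expression agrees with the correct sheet of $(i\eta)^{-1/2}$ for both signs of $\eta$. The rewriting $\sgn(\eta) = |\eta|/\eta$ is what repackages the resulting sign dependence into the single uniform closed form appearing on the right-hand side of \eqref{e:phat}.
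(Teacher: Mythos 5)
Your proof is correct, and it reaches the same regularized integral and the same Gradshteyn--Ryzhik formulas \eqref{e:gr1}--\eqref{e:gr2} as the paper, but by a genuinely different decomposition. The paper keeps the $n$-dimensional integral intact after completing the square: it passes to polar coordinates, integrates out the angular variable to produce the constant $|S^{n-1}|$, substitutes $r=\sqrt{s}$, and then invokes \eqref{e:gr1}--\eqref{e:gr2} once with $\mu=n/2$. You instead factor the integrand over the coordinates of $x$, evaluate a single one-dimensional Fresnel-type integral with $\mu=1/2$ (so $\Gamma(1/2)=\sqrt{\pi}$), and raise it to the $n$-th power; the two routes agree because $\tfrac{1}{2}\,|S^{n-1}|\,\Gamma(n/2)=\pi^{n/2}$. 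Your version is slightly more elementary, as it avoids the spherical area constant and the radial substitution, and it has the additional merit of explicitly justifying the complex contour shift $x_j\mapsto x_j - i\xi_j/(2(\epsilon+i\eta))$ via Cauchy's theorem and the $\exp(-\epsilon(\Re z)^2)$ decay in horizontal strips --- a step the paper performs silently when it substitutes $u = x - i\xi/(2(\epsilon+i\eta))$. The limit bookkeeping at the end, with $\arctan(\eta/\epsilon)\to(\pi/2)\,\sgn(\eta)$ and $\sgn(\eta)=|\eta|/\eta$, matches the paper's and correctly lands on the stated branch for both signs of $\eta$.
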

\begin{proof}
To find the limit in Equation~\eqref{e:phat}, we first compute the integral using the following steps.
\par
Completing the square, we obtain
\begin{displaymath}
  \exp \left( -\frac{|\xi|^{2}}{4 \, ( \epsilon + i \, \eta)} \right) \,
  \int_{\r{n}}
    \exp \left(-(\epsilon + i \, \eta) \, \left| x - \frac{i \, \xi}{2 \, (\epsilon + i \, \eta)} \right|^{2} \right) \,
  dx,
\end{displaymath}
which upon substitution
\begin{displaymath}
  x - \frac{i \, \xi}{2 \, (\epsilon + i \, \eta)} = u,
\end{displaymath}
results in
\begin{displaymath}
  \exp \left( -\frac{|\xi|^{2}}{4 \, ( \epsilon + i \, \eta)} \right) \,
  \int_{\r{n}}
    e^{-(\epsilon + i \, \eta) \, | u |^{2}}
  du.
\end{displaymath}
We now switch to polar coordinates and integrate over the angular variable thereby obtaining
\begin{displaymath}
  |S^{n-1}| \,
  \exp \left( -\frac{|\xi|^{2}}{4 \, ( \epsilon + i \, \eta)} \right) \,
  \int_{0}^{\infty}
    \exp \left(-(\epsilon + i \, \eta) \, r^{2} \right) \,
    r^{n-1} \,
  dr,
\end{displaymath}
where~$|S^{n-1}|$ is the constant given by Equation~\eqref{e:sn}.  Now substitute
\begin{math}
  r = \sqrt{s}
\end{math}
and apply Euler's formula
\begin{math}
  \exp (i \, \theta) = \cos (\theta) + i \, \sin (\theta)
\end{math}
to rewrite the integral as
\begin{displaymath}
  \begin{split}
    \frac{|S^{n-1}|}{2}
  &
    \exp \left( -\frac{|\xi|^{2}}{4 \, ( \epsilon + i \, \eta)} \right)
    \left(
      \int_{0}^{\infty}
        \exp(-\epsilon \, s) \,
        \cos(\eta \, s) \,
        s^{\frac{n-2}{2}} \,
      ds
    \right.\\
  &
    \left.-i \,
    \int_{0}^{\infty}
      \exp(-\epsilon \, s) \,
      \sin(\eta \, s) \,
      s^{\frac{n-2}{2}} \,
    ds
    \right)
  \end{split}
\end{displaymath}
Notice that the complex integral in Equation~\eqref{e:phat} is now reduced to two real-valued integrals similar to the ones listed in Equations~\eqref{e:gr1} and~\eqref{e:gr2}.  Setting
\begin{math}
  \beta = \epsilon,
\end{math}
\begin{math}
  \delta = \eta
\end{math}
and
\begin{math}
  \mu = n/2,
\end{math}
in tabular integrals~\eqref{e:gr1} and~\eqref{e:gr2}, and substituting the results into the preceding expression, we are led to
\begin{displaymath}
    \frac{|S^{n-1}| \, \Gamma \left( \frac{n}{2} \right)}{2} \,
    \left( \epsilon^{2} + \eta^{2} \right)^{-\frac{n}{4}} \,
    \exp
    \left(
      -\frac{|\xi|^{2}}{4 \, ( \epsilon + i \, \eta)} -
      \frac{n}{2} \, \tan^{-1} \left( \frac{\eta}{\epsilon} \right) \, i
    \right).
\end{displaymath}
Now the limit as \begin{math} \epsilon \to 0^{+} \end{math} can be easily taken and the subsequent replacement of the constant~$|S^{n-1}|$ with its value given by Equation~\eqref{e:sn} leads to Equation~\eqref{e:phat}, as required.
\end{proof}
We are now ready to derive the inversion formula~\eqref{e:ip}.
\begin{proof}[Derivation of Equation~\eqref{e:ip}]
Applying the Fourier transform~\eqref{e:fourier} to both sides of Equation~\eqref{e:p} and interchanging the order of integrations, we get the product relation
\begin{math}
  \widehat{G} = \widehat{\norton} \, \widehat{F}
\end{math}
in which the Fourier multiplier
\begin{displaymath}
  \widehat{\norton} =
  \left[
    \int_{\r{n}} \exp(i \, \xi \cdot u - i \, \eta \, |u|^{2}) \, du
  \right]
\end{displaymath}
is divergent for all
\begin{math}
  n > 2.
\end{math}
This necessitates a full switch to distributional perspective.
\par
Motivated by the form of Equation~\eqref{e:p}, we define a linear functional~$T$ on the Schwartz space of rapidly decreasing functions~$\sr{n+1}$ by
\begin{equation*}
  T (\phi) =
  \int_{\r{n}}
    \phi \left( x, -|x|^{2} \right) \,
  dx.
\end{equation*}
Evidently, $T$ is linear and continuous on~$\sr{n+1}$: as such, it is a tempered distribution in~$\sprimer{n+1}$ with a well-defined Fourier symbol
\begin{math}
  \widehat{T} \in \sprimer{n+1}.
\end{math}
\par
Next we reinterpret the right-hand side of Equation~\eqref{e:p} as the convolution of
\begin{math}
  T \in \sr{n+1}
\end{math}
with a \emph{test-function}
\begin{math}
  F \in \sr{n+1},
\end{math}
i.e.:
\begin{math}
  G = T \star F.
\end{math}
The result of application of the Fourier transform~\eqref{e:fourier} to Equation~\eqref{e:p} can now be restated as
\begin{math}
  \widehat{G} = \widehat{T} \, \widehat{F}.
\end{math}
Ostensibly, we have only replaced the symbol~$\norton$ with a new symbol~$T$.  Yet this subtle notational change makes all the difference.  Indeed, the Fourier symbol $\widehat{\norton}$, having lost classical meaning, can now be identified with~$\widehat{T}$---a well-defined tempered distribution in~$\sprimer{n+1}$.
\par
Following Kanwal in~\cite{Kanwal-2004}, we use an exponential \emph{convergence factor} to regularize~$\widehat{T}$, \footnote{The use of convergence factors had originated in mathematical physics as a heuristic way of computing ``finite parts'' of divergent integrals.  Yet, it is now well-known that the ``physics'' regularization of divergent Fourier integrals is equivalent to the formal definition of the Fourier transform on~$\sprimer{n}$ as long as the assumptions of the Lebesgue Dominated Convergence Theorem are satisfied.} i.e., we define
\begin{displaymath}
  \widehat{T} =
    \lim_{\epsilon \to 0^{+}}
    \int_{\r{n}}
      \exp \left( i \, \xi \cdot x - ( \epsilon + i \, \eta) \, |x|^{2} \right) \,
    dx,
\end{displaymath}
which is precisely the limit computed in Lemma~\ref{l:phat}.\footnote{Lemma~\ref{l:phat} (giving~$\widehat{T}$) is largely identical to Kanwal's computation of the fundamental solution of Shr\"odinger's equation in free space (Page~$270$ in~\cite{Kanwal-2004}). Kanwal further mentions that a faster way to compute the integral is to use the standard Fourier transform of a Gaussian.  We, however, we prefer the method used in this section as more informative.}
\par
Using Equation~\eqref{e:phat} we conclude that
\begin{equation*}
  \widehat{G}(\xi,\eta) =
  \left( \frac{\pi}{|\eta|} \right)^{\frac{n}{2}} \,
  \exp \left(-i \, \frac{|\xi|^{2} - \pi \, n \, |\eta|}{4 \, \eta} \right) \,
  \widehat{F}(\xi,\eta),
\end{equation*}
whence follows the reciprocal relation
\begin{math}
  \widehat{F} = (1/\widehat{T}) \, \widehat{G}
\end{math}
which we write in the form
\begin{displaymath}
  \widehat{F}(\xi,\eta) = \pi^{-n} \,
    \left[
      \left( \frac{\pi}{|\eta|} \right)^{\frac{n}{2}} \,
      \exp \left(-i \, \frac{|\xi|^{2} - \pi \, n \, |\eta|}{4 \, \eta} \right)
    \right] \,
    |\eta|^{n} \,
    \widehat{G}(\xi,\eta).
\end{displaymath}
It remains to identify the terms multiplying~$\widehat{G}$ on the right-hand side.  The bracketed term, being the complex conjugate~$\overline{\widehat{T}}$ corresponds to the adjoint~$T^{*}$ (which in turn corresponds to~$\norton^{*}$) while the term~$|\eta|^{n}$ is the Fourier symbol of the operator~$\Lambda_{y}$ given in Equation~\eqref{e:lambda_hat}.  We are thus led to Equation~\eqref{e:phat}, as required.
\end{proof}

\begin{remark} \label{rem:ordering}
It may appear that along with Equation~\eqref{e:ip} we may form other inversion formulas for the operator~$\norton$ by permuting~$\norton^{*}$ with the operators defining~$\Lambda_{y}$.  However, our discussion of the mapping properties of~$\norton$ in Section~\ref{s:range} prohibits~$\norton^{*}$ from being the leftmost operator (Remark~\ref{rem:asymptotic}).  It is quite possible that the inversion formula \( F = \pi^{-n} \, (\Lambda_{y} \circ \norton^{*} \circ \norton)(F) \) may be interpreted on~$\sr{n+1}$ in distributional sense.  Yet, from the practical point of view, an expression with classical meaning is far more preferable.
\end{remark}

It remains to verify that the inversion formula given by Equation~\eqref{e:ip} is well-defined which will then complete the proof of Theorem~\ref{t:ip}.  We do this in the next Section~\ref{s:radon} by deriving Equation~\eqref{e:ip} from the classical Radon inversion formula.
%\input{radon}
% Last edit: Sunday, September 27, 2009  13:44
% Status: needs proofreading

\section{Connection to Radon theory}
\label{s:radon}

In this section we are going to adhere to our earlier convention whereby we identify
\begin{math}
  \r{n+1} \equiv \r{n} \times \r{1}.
\end{math}
Accordingly, we shall parameterize points on the unit sphere
\begin{math}
  S^{n} \subset \r{n+1}
\end{math}
using pairs~$(\omega,\theta)$ with
\begin{math}
  \omega \in \r{n}
\end{math}
and
\begin{math}
  \theta \in \r{1}.
\end{math}
This may, unfortunately, make some of the equations listed below look unfamiliar.  Nevertheless, these are all straightforward adaptations of the formulas in classical Radon theory compiled from the first chapter of Helgason's text~\cite{Helgason-1999}.
\par
Let
\begin{math}
  \phi \in \sr{n+1}
\end{math}
be a rapidly decreasing function.  We introduce the classical $(n+1)$-dimensional Radon operator~$\radon$ by
\begin{equation} \label{e:radon}
  \radon(\phi)((\omega,\theta),p) =
  \int_{(\omega,\theta) \cdot (x,y) = p}
    \phi(x,y) \,
  dm(x,y).
\end{equation}
Notice that according to Equation~\eqref{e:radon} the Radon operator~$\radon$ takes
\begin{math}
  \phi \in \sr{n+1}
\end{math}
into its integrals over the $n$-planes in~$\r{n+1}$ parameterized by
\begin{math}
  (\omega,\theta) \cdot (x,y) = p.
\end{math}
In the parametrization of the planes the normal vector
\begin{math}
  (\omega,\theta) \in S^{n}
\end{math}
is the point on the unit sphere, split in accordance with our earlier explanation, while
\begin{math}
  p \in \r{1}
\end{math}
is the signed distance to the origin.  The measure~$dm(x,y)$ in Equation~\eqref{e:radon} is the $n$-dimensional Euclidean volume element in the plane.
\par
Now let
\begin{math}
  \psi = \radon(\phi).
\end{math}
Notice that, according to Equation~\eqref{e:radon}, $\psi$ is a function on
\begin{math}
  S^{n} \times \r{1}.
\end{math}
Following Helgason in~\cite{Helgason-1999}, we define the adjoint Radon transform~$\radon^{*}$ by
\begin{equation} \label{e:radon*}
  \radon^{*}(\psi)(x,y) =
  \int_{(\omega,\theta) \in S^{n}}
    \psi((\omega,\theta), (\omega,\theta) \cdot (x,y)) \,
  d(\omega,\theta),
\end{equation}
where~$d(\omega,\theta)$ is the normalized Euclidean volume element on~$S^{n}$.
\par
In Section~\ref{s:preliminaries} we introduced the Hilbert transform~$\hilbert_{y}$ and $\Lambda_{y}$-operator acting on the $y$-variable (Equations~\eqref{e:hy} and~\eqref{e:lambda}, respectively). In this section we will use the same operators but acting on the $p$-variable instead:
\begin{eqnarray*}
  \hilbert_{p}(\phi)(\cdot,p) &=&
    \frac{1}{\pi} \, \pv \,
    \int\limits_{-\infty}^{+\infty}
      \frac{\phi(\cdot,t)}{p - t} \\
  \Lambda_{p} &=&
    \begin{cases}
      (-1)^{\frac{n-1}{2}} \, (\hilbert_{p} \circ \partial_{p}^{n}), &n=1,3,5,\ldots, \\
      (-1)^{\frac{n}{2}} \, \partial_{p}^{n} &n=2,4,6,\ldots
    \end{cases}
\end{eqnarray*}
As can be readily deduced from Chapter~I of~\cite{Helgason-1999},\footnote{Helgason's definition of $\Lambda$-operator does not have the powers of $(-1)$ while his definition of Hilbert transform, unlike ours, has an imaginary unit as a multiple.} the following commutation relation holds:
\begin{equation} \label{e:lambda_comm}
  (\radon \circ \Lambda_{y})(\phi)((\omega,\theta),p) =
  \theta^{n} \, (\Lambda_{p} \circ \radon)(\phi)((\omega,\theta),p).
\end{equation}
Most importantly, $\Lambda$-operator occurs in the following variant of the classical Radon inversion formula
\begin{equation} \label{e:iradon} % The constant checks out!
     \phi = c_{n} \, (\radon^{*} \circ \Lambda_{p} \circ \radon) (\phi), \quad
     c_{n} = (-2 \, \pi)^{-n} \, \frac{\pi^{\frac{n+1}{2}}}{\Gamma \left( \frac{n+1}{2} \right)},
\end{equation}
which strongly resembles the inversion formula for the convolution operator~$\norton$ stated in Theorem~\ref{t:ip} of Section~\ref{s:results}.
\par
We complete our review of classical Radon theory by taking note of the well-known \emph{translation property} of the operator~$\radon$.  Let~$\tau$ denote a shift of a function as in
\begin{math}
  \tau_{(a,b)}(\phi)(x,y) = \phi(x + a,y + b).
\end{math}
Then by making a straightforward substitution in Equation~\eqref{e:radon}, it is easy to show (see Page~$3$ in~\cite{Helgason-1999}) that
\begin{equation} \label{e:tau}
    ( \radon \circ \tau_{(a,b)} )(\phi)((\omega,\theta),p) =
      \radon(f)((\omega,\theta), p + (\omega,\theta) \cdot (a,b)).
\end{equation}
Now, in order to deduce Theorem~\ref{t:ip} from Equation~\eqref{e:iradon}, which is the principle goal of this section, we are going to require the following technical lemma.
\begin{lemma} \label{l:pullback}
Let
\begin{math}
  S_{-}^{n} = \{ (\omega,\theta) \in S^{n} \mid \theta < 0 \}
\end{math}
be the lower half of the unit sphere
\begin{math}
  S^{n} \in \r{n+1}.
\end{math}
Then
\begin{equation} \label{e:equal}
  \int_{\r{n}}
    h \left( \frac{(z,-1)}{\sqrt{1 + |z|^{2}}} \right) \,
  \frac{dz}{(1 + |z|^{2})^{\frac{n+1}{2}}} =
  |S^{n}| \,
  \int_{S_{-}^{n}}
    h(\omega,\theta) \,
  d(\omega,\theta),
\end{equation}
where~$d(\omega,\theta)$ is the normalized spherical measure (restricted to~$S_{-}^{n}$) while the constant~$|S^{n}|$ is the total $n$-dimensional volume of the unit sphere given by Equation~\eqref{e:sn}.
\end{lemma}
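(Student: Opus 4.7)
The plan is to recognize that the map
\[
  \Phi: \r{n} \longrightarrow S_{-}^{n}, \qquad
  \Phi(z) = \frac{(z,-1)}{\sqrt{1 + |z|^{2}}},
\]
is a smooth bijection (a stereographic-type parametrization of the lower hemisphere from the ``south pole'' $(0,-1)$), and then to verify that the weight $(1+|z|^{2})^{-(n+1)/2}\, dz$ appearing on the left-hand side is precisely the pullback of the unnormalized surface measure on $S_{-}^{n}$ under $\Phi$. Since the measure $d(\omega,\theta)$ on the right is normalized (total mass $1$ over $S^{n}$) while the factor $|S^{n}|$ clears that normalization, one ends up comparing $\Phi^{*}(dS)$ with the Euclidean density $(1+|z|^{2})^{-(n+1)/2}\, dz$.

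To execute this cleanly, I would introduce polar coordinates on $\r{n}$, writing $z = r\sigma$ with $r \geq 0$ and $\sigma \in S^{n-1}$, so that $dz = r^{n-1}\, dr\, d\sigma$. Correspondingly, parameterize $S_{-}^{n}$ by spherical coordinates $(\omega,\theta) = (\sin\phi\cdot\sigma, -\cos\phi)$ with $\phi \in [0,\pi/2)$ and $\sigma \in S^{n-1}$, so that the standard surface element on $S^{n}$ restricts to $\sin^{n-1}\phi\, d\phi\, d\sigma$ on $S_{-}^{n}$. Under $\Phi$, a direct comparison gives $\cos\phi = 1/\sqrt{1+r^{2}}$ and $\sin\phi\cdot\sigma = r\sigma/\sqrt{1+r^{2}}$, which is equivalent to the substitution $r = \tan\phi$.

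A short computation then converts the left-hand integrand:
\[
  \frac{r^{n-1}\, dr}{(1+r^{2})^{(n+1)/2}}
  = \frac{\tan^{n-1}\phi \cdot \sec^{2}\phi \, d\phi}{\sec^{n+1}\phi}
  = \sin^{n-1}\phi\, d\phi,
\]
so the integrand on the left equals $h(\Phi(z))\, \sin^{n-1}\phi\, d\phi\, d\sigma$, which coincides with $h(\omega,\theta)\, dS(\omega,\theta)$ on $S_{-}^{n}$. Multiplying through by the normalization constant $|S^{n}|$ (which converts $dS$ to $d(\omega,\theta)$) yields the stated identity.

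I do not anticipate any real obstacle here: the only nontrivial step is the surface-measure calculation, and the substitution $r=\tan\phi$ reduces it to elementary trigonometry. The one point to be careful about is bookkeeping of the normalization of $d(\omega,\theta)$ versus the standard surface element $dS$ on $S^{n}$, which is exactly what the factor $|S^{n}|$ on the right of \eqref{e:equal} accounts for.
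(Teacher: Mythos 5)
Your proof is correct, and it takes a genuinely different route from the paper's. The paper also starts from the same map $\Phi(z)=(z,-1)/\sqrt{1+|z|^{2}}$ (which it calls a \emph{central} projection; note it is gnomonic rather than stereographic, though this is only terminology), but it then computes the pullback $\Phi^{*}(\vol_{S^{n}})$ of the explicit volume $n$-form on $S^{n}$ as the determinant of an $(n+1)\times(n+1)$ Jacobian-type matrix $A$, which it evaluates by relating $A$ to a rank-two matrix $C$, finding all eigenvalues of $C$, and reading off the characteristic polynomial --- about two pages of linear algebra. You instead exploit the $\operatorname{O}(n)$-equivariance of $\Phi$: passing to polar coordinates $z=r\sigma$ and matching against the spherical parametrization $(\sin\phi\,\sigma,-\cos\phi)$ collapses the whole Jacobian computation to the one-dimensional substitution $r=\tan\phi$, and the identity
\begin{equation*}
  \frac{r^{n-1}\,dr}{(1+r^{2})^{\frac{n+1}{2}}}=\sin^{n-1}\phi\,d\phi
\end{equation*}
does all the work. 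Your bookkeeping of the normalization ($dS=|S^{n}|\,d(\omega,\theta)$) is also right. What your approach buys is brevity and transparency at the cost of invoking the standard surface-measure formula $\sin^{n-1}\phi\,d\phi\,d\sigma$ on $S^{n}$ (which is easily verified from the induced metric, so this is a very mild dependency); what the paper's approach buys is a self-contained coordinate-free pullback computation that never presupposes that formula. Either argument is a complete proof of the lemma.
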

The proof of Lemma~\ref{l:pullback} is a straightforward, albeit involved computation; it is presented in Appendix~\ref{s:appendix1}. Meanwhile, we proceed to the proof of the main results of this section.
\par
Let
\begin{math}
  F \in \sr{n}
\end{math}
be a rapidly decreasing function and set
\begin{math}
  G = \norton(F)
\end{math}
as in Equation~\eqref{e:p}.  Theorem~\ref{t:validation} below validates the inversion formulas stated in Theorem~\ref{t:ip} of Section~\ref{s:results}.
\begin{theorem} \label{t:validation}
The inversion formula
\begin{displaymath}
  F = \pi^{-n} \, (\norton^{*} \circ \Lambda_{y})(G)
    = \pi^{-n} \, (\norton^{*} \circ \Lambda_{y} \circ \norton)(F),
\end{displaymath}
stated in Theorem~\ref{t:ip} in Section~\ref{s:results} is valid for rapidly decreasing functions
\begin{math}
  F \in \sr{n}.
\end{math}
\end{theorem}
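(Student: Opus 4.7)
The plan is to deduce Theorem~\ref{t:validation} directly from the classical Radon inversion formula~\eqref{e:iradon}, bypassing the distributional Fourier argument of Section~\ref{s:fourier}. Since every operator in the composition $\norton^{*} \circ \Lambda_{y} \circ \norton$ commutes with translations in $(x,y)$, it suffices to verify the identity at the origin. So I would evaluate Equation~\eqref{e:iradon} at $F(0,0)$ and exploit antipodal symmetry on the sphere: the identity $\radon F((-\omega,-\theta), -p) = \radon F((\omega,\theta), p)$, combined with the evenness of the Fourier multiplier $|\tau|^{n}$ that defines $\Lambda_{p}$, shows that the integrand is invariant under $(\omega,\theta) \mapsto (-\omega,-\theta)$. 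Hence the integral over $S^{n}$ reduces to twice the integral over the lower hemisphere $S_{-}^{n}$.

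On $S_{-}^{n}$ I would parametrize $(\omega,\theta) = (z,-1)/\sqrt{1+|z|^{2}}$ and apply Lemma~\ref{l:pullback} to pull the integration back to $\r{n}$. Under this parametrization the hyperplane $(\omega,\theta)\cdot(\xi,\eta) = p$ reads $\eta = z\cdot\xi - p\sqrt{1+|z|^{2}}$, with induced Euclidean surface measure $\sqrt{1+|z|^{2}}\,d\xi$. A short Fourier-multiplier calculation (or a case analysis by parity of $n$) gives the scaling identity $\Lambda_{p}[\phi(\xi, a-bp)] = |b|^{n}\,\Lambda_{y}[\phi(\xi,\cdot)](a-bp)$ for $b>0$, which with $b = \sqrt{1+|z|^{2}}$ and $p=0$ yields
\begin{displaymath}
  \Lambda_{p}[\radon F((z,-1)/\sqrt{1+|z|^{2}},\cdot)](0)
  = (1+|z|^{2})^{(n+1)/2}\int_{\r{n}} \Lambda_{y} F(\xi, z\cdot\xi)\,d\xi.
\end{displaymath}
The weight $(1+|z|^{2})^{(n+1)/2}$ cancels exactly against the Jacobian in Lemma~\ref{l:pullback}, and tallying the constants $c_{n}$ and $|S^{n}|$ collapses Radon inversion to a double integral of the form $F(0,0) = \kappa_{n}\int_{\r{n}}\int_{\r{n}} \Lambda_{y} F(\xi, z\cdot\xi)\,d\xi\,dz$ for an explicit constant $\kappa_{n}$.

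On the $\norton$-side I would expand $(\norton^{*} \circ \Lambda_{y} \circ \norton)(F)(0,0)$ directly. Since $\Lambda_{y}$ acts only on the $y$-coordinate, $\Lambda_{y}\circ\norton = \norton\circ\Lambda_{y}$, and the substitution $w=u+v$ in the integral defining $G=\norton F$ gives $\Lambda_{y} G(u,|u|^{2}) = \int_{\r{n}} \Lambda_{y} F(w, 2u\cdot w - |w|^{2})\,dw$. Substituting $z = 2u$ in the outer integration (with Jacobian $2^{-n}$) and then shifting $z\mapsto z+w$ for each fixed $w$ to absorb the $-|w|^{2}$ term reduces the triple composition to the same double integral appearing above, with overall constant $2^{-n}$. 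Multiplying by $\pi^{-n}$ and matching against $\kappa_{n}$, using the explicit values $c_{n} = (\pm 2\pi)^{-n}\pi^{(n+1)/2}/\Gamma((n+1)/2)$ and $|S^{n}| = 2\pi^{(n+1)/2}/\Gamma((n+1)/2)$, closes the argument.

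The principal technical obstacle is the convergence of the iterated integrals: by the asymptotic analysis underlying Lemma~\ref{l:asymptotic}, $\norton F(u,|u|^{2})$ decays only like $|u|^{-1}$, so the outer integral is not absolutely convergent for $n\ge 2$. However, the commutation relation $\radon\circ\Lambda_{y} = \theta^{n}\,\Lambda_{p}\circ\radon$ forces the coefficient of the leading $1/|u|$ term in the asymptotic expansion of $\Lambda_{y}(\norton F)(u,|u|^{2})$ to vanish (through the factor $\theta^{n}$ at the equatorial value $\theta=0$), providing just enough decay to make the integrals finite in the specific iterated order supplied by the Radon pullback. Carefully justifying these interchanges of integration, and keeping track of the signs arising from the commutation of $\Lambda_{p}$ past the change of variable on each hyperplane, is the step that requires the most care.
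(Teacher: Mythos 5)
Your proposal follows essentially the same route as the paper's own proof of Theorem~\ref{t:validation}: the change of variables $w=u+v$, $z$ proportional to $v-u$ (Jacobian $2^{-n}$) reducing the composition to the double integral $\int\!\int \Lambda_{y}F(w,z\cdot w)\,dw\,dz$, recognition of the inner integral as a Radon transform over hyperplanes with normal $\pm(z,-1)/\sqrt{1+|z|^{2}}$, the commutation of $\Lambda$ past $\radon$ with the factor $|\theta|^{n}$ (equivalently $(1+|z|^{2})^{(n+1)/2}$ cancelling the Jacobian of Lemma~\ref{l:pullback}), the hemisphere-to-sphere symmetry, and the final matching of $c_{n}$ against $|S^{n}|$ are exactly the paper's steps, merely organized as a meet-in-the-middle rather than a one-directional computation. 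The argument is correct at the paper's own level of rigor, and your closing remarks on the non-absolute convergence of the iterated integrals go slightly beyond what the paper itself justifies.
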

\begin{proof}
Since~$F$ is rapidly decreasing, the operators~$\Lambda_{y}$ and~$\norton$ commute:
\begin{displaymath}
  (\Lambda_{y} \circ \norton)(F)(x,y) =
  (\norton \circ \Lambda_{y})(F)(x,y) =
    \int_{\r{n}}
      \Lambda_{y}(F)(x + u, y - |u|^{2}) \,
    du.
\end{displaymath}
Hence, for a fixed
\begin{math}
  (x_{0},y_{0}) \in \r{n} \times \r{1},
\end{math}
we can rewrite the right-hand side of the inversion formula as
\begin{displaymath}
  \pi^{-n} \,
  \int_{\r{n}}
  \left[
    \int_{\r{n}}
      \Lambda_{y}(F)(x_{0} + u + v, y_{0} + |v|^{2} - |u|^{2}) \,
    du
  \right] \,
  dv,
\end{displaymath}
which evidently is an integral in
\begin{math}
  \r{n} \times \r{n}.
\end{math}
Now the change of variables
\begin{displaymath}
    v + u = w, \quad
    v - u = z, \quad
    (|v|^{2} - |u|^{2} = (v + u) \cdot (v - u) = w \cdot z)
\end{displaymath}
transforms that integral into the expression
\begin{displaymath}
  (2 \, \pi)^{-n} \,
  \int_{\r{n}}
  \left[
    \int_{\r{n}}
      \Lambda_{y}(F)(x_{0} + w, y_{0} + z \cdot w) \,
    dw
  \right] \,
  dz,
\end{displaymath}
which we proceed to restate in terms of Radon transform~\eqref{e:radon}.  To see how the operator~$\radon$ enters the picture, rewrite the bracketed integral as
\begin{displaymath}
  \frac{1}{\sqrt{1 + |z|^{2}}} \,
  \int_{\r{n}}
    ( \tau_{(x_{0},y_{0})} \circ \Lambda_{y})(F)(w,z \cdot w) \,
  \sqrt{1 + |z|^{2}} \, dw.
\end{displaymath}
This shows that~$\Lambda_{y}(F)(x,y)$ is first shifted by~$(x_{0},y_{0})$, then integrated over the plane
\begin{math}
  (w, z \cdot w, w \in \r{n})
\end{math}
with respect to the natural volume element
\begin{math}
  \sqrt{1 + |z|^{2}} \, dw,
\end{math}
and, finally, scaled by
\begin{math}
  (1 + |z|^{2})^{-1/2}:
\end{math}
in sum,
\begin{displaymath}
  \frac{1}{\sqrt{1 + |z|^{2}}} \,
  (\radon \circ \tau_{(x_{0},y_{0})} \circ \Lambda_{y})(F)
  \left( \pm \frac{(z,-1)}{\sqrt{1 + |z|^{2}}}, 0 \right),
\end{displaymath}
where we have complete freedom in choosing the sign of the normal---an important symmetry to be exploited later.

Choosing the positive sign for now and using the translation property~\eqref{e:tau} combined with the $\Lambda$-commutation property~\eqref{e:lambda_comm}, we are led to
\begin{displaymath}
  \frac{(-1)^{n}}{(1+|z|^{2})^{\frac{n+1}{2}}} \,
  (\Lambda_{p} \circ \radon)(F)
  \left(
    \frac{(z,-1)}{\sqrt{1 + |z|^{2}}},
    \frac{(z,-1)}{\sqrt{1 + |z|^{2}}} \cdot (x_{0},y_{0})
  \right),
\end{displaymath}
which reduces our task to proving
\begin{displaymath}
  F(x_{0},y_{0}) =
  (-2 \, \pi)^{-n}
  \int_{\r{n}}
    \frac{
    (\Lambda_{p} \circ \radon)(F)
    \left(
      \frac{(z,-1)}{\sqrt{1 + |z|^{2}}}, \frac{(z,-1)}{\sqrt{1 + |z|^{2}}} \cdot (x_{0},y_{0})
    \right)}{(1+|z|^{2})^{\frac{n+1}{2}}} \, dz.
\end{displaymath}
We claim that the latter is equivalent to the Radon inversion formula~\eqref{e:iradon}.  To prove the claim and thus settle the theorem, we apply the mapping~$\Phi$ from Lemma~\ref{l:pullback} to change integration over~$\r{n}$ into integration over the lower half of the unit sphere~$S_{-}^{n}:$
\begin{displaymath}
  F(x_{0},y_{0}) =
  \frac{|S^{n}|}{(-2 \, \pi)^{n}} \,
  \int_{S_{-}^{n}} \,
    (\Lambda_{p} \circ \radon)(F)
    ((\omega,\theta), (\omega,\theta) \cdot (x_{0},y_{0}) ) \,
  d(\omega,\theta).
\end{displaymath}
We now observe that the integration over the \emph{lower} half-sphere~$S_{-}^{n}$ is the consequence of our earlier choice of the positive sign in front of the normal
\begin{displaymath}
  \pm \frac{(z,-1)}{\sqrt{1 + |z|^{2}}}
\end{displaymath}
of the plane
\begin{math}
  \{ w, z \cdot w, w \in \r{n} \}
\end{math}
whereas had we chosen the negative sign, we would have obtained instead the same integral over the \emph{upper} half-sphere~$S_{+}^{n}$ as immediately seen from Lemma~\ref{l:pullback}.  Therefore we can extend the domain of integration to the entire unit sphere~$S^{n}$:
\begin{displaymath}
  \begin{split}
    F(x_{0},y_{0})
  &=
    \frac{1}{2} \,
    \frac{|S^{n}|}{(-2 \, \pi)^{n}} \,
    \int_{S^{n}} \,
      (\Lambda_{p} \circ \radon)(F)
      ((\omega,\theta), (\omega,\theta) \cdot (x_{0},y_{0}) ) \,
    d(\omega,\theta)\\
  &=
    \frac{1}{2} \,
    \frac{|S^{n}|}{(-2 \, \pi)^{n}} \,
    (\radon^{*} \circ \Lambda_{p} \circ \radon)(F)(x_{0},y_{0}),
  \end{split}
\end{displaymath}
Now substitution of the area of the unit sphere~$S^{n}$ with its expression~\eqref{e:sn} from Section~\ref{s:preliminaries} leads to Radon inversion formula~\eqref{e:iradon}, as required.
\end{proof}

The intimate connection between~$\norton$ and~$\radon$ elucidated by Theorem~\ref{t:validation} suggests that instead of using Fourier transform to invert~$\norton$, as we did in Section~\ref{s:fourier}, we could instead use Radon transform.  Moreover, the use of Radon transform is, in some sense, more natural.\footnote{See also the discussion of applications of Radon transform to partial differential equations at the end of Chapter~I in~\cite{Helgason-1999}.} Indeed, Lemma~\ref{l:compact} in Section~\ref{s:range} shows that, unlike Fourier transform, the Radon transform in~$\r{n+1}$ can be freely applied to
\begin{math}
  G = \norton(F)
\end{math}
without recourse to distribution theory.  This motivates our derivation of a Radon-based inversion formula~\eqref{e:ip_radon} in Theorem~\ref{t:ip_radon} below.  First, however, we need to introduce some additional background.
\par
Let
\begin{math}
  \phi \in \mathscr{S} \left( S^{n} \times \r{1} \right):
\end{math}
think of $\phi$ as the Radon transform of a rapidly decreasing function in~$\r{n+1}$.  We now define a one-dimensional operator~$\q$ acting on the $p$-variable by
\begin{equation} \label{e:q}
  \q(\phi)((\omega,\theta),p) =
  \int_{\r{n}}
    \phi((\omega,\theta), p + (\omega,\theta) \cdot (u,-|u|^{2})) \,
  du.
\end{equation}
The adjoint operator~$\q^{*}$ is then given by
\begin{equation} \label{e:q*}
  \q^{*}(\phi)((\omega,\theta),p) =
  \int_{\r{n}}
    \phi((\omega,\theta), p + (\omega,\theta) \cdot (v,|v|^{2})) \,
  dv,
\end{equation}
while the inverse~$\q^{-1}$ is presented in Lemma~\ref{l:iq} below.

\begin{lemma} \label{l:iq}
Let~$\q$ be the operator defined by Equation~\eqref{e:q}.  Then
\begin{equation} \label{e:iq}
  \phi = \frac{|\theta|^{n}}{\pi^{n}} \, (\q^{*} \circ \Lambda_{p})(\phi), \quad
  \phi \in \mathscr{S} \left( S^{n} \times \r{1} \right),
\end{equation}
where~$\q^{*}$ is the adjoint operator defined by Equation~\eqref{e:q*}.
\end{lemma}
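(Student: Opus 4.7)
The plan is to deduce the formula for $\q^{-1}$ directly from Theorem \ref{t:validation} (equivalently Theorem \ref{t:ip}) by applying the Radon transform and exploiting that $\q$ is the ``Radon transcription'' of the paraboloid convolution $\norton$. This is the natural route given that Section \ref{s:radon} has been constructed around the Radon inversion formula, and it avoids any recourse to distribution theory.

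First I would establish the intertwining relations
\[
  \radon \circ \norton = \q \circ \radon,
  \qquad
  \radon \circ \norton^{*} = \q^{*} \circ \radon.
\]
These follow by direct substitution: writing
$\norton(F)(x,y) = \int_{\r{n}} \tau_{(u,-|u|^{2})}(F)(x,y)\,du$,
one applies $\radon$ and pulls it inside the $u$-integral; the translation property \eqref{e:tau} then converts each $\radon(\tau_{(u,-|u|^{2})}F)((\omega,\theta),p)$ into a shift of the $p$-argument by $(\omega,\theta)\cdot(u,-|u|^{2})$, which is precisely the integrand in the definition \eqref{e:q} of $\q$. The adjoint version is identical, with $(v,|v|^{2})$ in place of $(u,-|u|^{2})$.

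Next, I apply $\radon$ to the inversion formula $F = \pi^{-n}(\norton^{*} \circ \Lambda_{y} \circ \norton)(F)$ of Theorem \ref{t:validation}. Using the two intertwinings once each, and the $\Lambda$-commutation identity \eqref{e:lambda_comm} in the middle, one obtains
\[
  \radon(F) = \frac{\theta^{n}}{\pi^{n}}\,(\q^{*} \circ \Lambda_{p} \circ \q)(\radon F),
\]
which expresses the operator identity $\q^{*}\circ\Lambda_{p}\circ\q = \pi^{n}\theta^{-n}\cdot\mathrm{id}$ on the range of $\radon$, and thus yields the inversion formula for $\q$.

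The main obstacle is twofold: reconciling $\theta^{n}$ with $|\theta|^{n}$, and extending the identity from the range of $\radon$ to all of $\mathscr{S}(S^{n}\times\r{1})$. The first is handled by the evenness symmetry $\radon(F)((-\omega,-\theta),-p) = \radon(F)((\omega,\theta),p)$, which absorbs the sign of $\theta$ in odd dimensions. As a cross-check (and an alternative proof) one can fix $(\omega,\theta)$, view $\q$ as a one-dimensional convolution in $p$, compute its Fourier symbol via Lemma \ref{l:phat} with $\xi=-\eta\omega$, $\eta'=-\eta\theta$, and observe that the unimodular phase cancels in $\overline{\widehat\q}\cdot\widehat\q$, leaving exactly $|\widehat\q(\eta)|^{2} = \pi^{n}/(|\theta|^{n}|\eta|^{n})$; multiplication by $\widehat{\Lambda_{p}} = |\eta|^{n}$ then produces the constant $\pi^{n}/|\theta|^{n}$ for arbitrary $\phi \in \mathscr{S}(S^{n}\times\r{1})$.
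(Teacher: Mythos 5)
Your primary route has a genuine gap. Applying $\radon$ to the inversion formula of Theorem~\ref{t:validation} and using the intertwinings $\radon\circ\norton=\q\circ\radon$ and $\radon\circ\norton^{*}=\q^{*}\circ\radon$ yields the identity $\q^{*}\circ\Lambda_{p}\circ\q=\pi^{n}\,|\theta|^{-n}\,\mathrm{id}$ only on functions of the form $\radon(F)$ with $F\in\sr{n+1}$. The range of $\radon$ is a proper subspace of $\mathscr{S}\left(S^{n}\times\r{1}\right)$, cut out by the evenness condition and the Helgason--Ludwig moment conditions, so an identity verified there does not extend to arbitrary $\phi\in\mathscr{S}\left(S^{n}\times\r{1}\right)$ without further argument --- and the lemma is stated, and needed, in that generality. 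There is also a sign you leave unresolved: the commutation~\eqref{e:lambda_comm} carries $\theta^{n}$, while for odd $n$ the Hilbert transform contributes an extra $\sgn(\theta)$ under conjugation by $\radon$, which is exactly what converts $\theta^{n}$ into $|\theta|^{n}$; an appeal to evenness gestures at this but does not pin it down, and the two constants genuinely differ when $\theta<0$ and $n$ is odd. A further, non-fatal objection is structural: the point of Section~\ref{s:radon} is to give an inversion of $\norton$ independent of the distributional Fourier argument, so deriving Lemma~\ref{l:iq} from Theorem~\ref{t:validation} would turn Theorem~\ref{t:ip_radon} into a corollary of Theorem~\ref{t:ip} rather than an alternative route to it.

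The argument that actually proves the lemma in the stated generality is the one you relegate to a ``cross-check,'' and it is precisely the paper's proof: fix $(\omega,\theta)$, regard $\q$ as a one-dimensional convolution in $p$, compute the regularized symbol $\widehat{\q}=\left(\pi/|\xi\,\theta|\right)^{n/2}\exp\left(i\,\bigl(|\xi\,\omega|^{2}-\pi\,n\,|\xi\,\theta|\bigr)/(4\,\xi\,\theta)\right)$ from Lemma~\ref{l:phat}, observe that $\overline{\widehat{\q}}=\widehat{\q^{*}}$ and $|\widehat{\q}|^{2}=\pi^{n}/(|\theta|^{n}\,|\xi|^{n})$, and cancel against the symbol $|\xi|^{n}$ of $\Lambda_{p}$. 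You should promote that computation to the main argument and either drop the Radon-based derivation or explicitly restrict its conclusion to the range of $\radon$.
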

\begin{proof}
Applying one-dimensional Fourier transform in the $p$-variable
\begin{displaymath}
  \phi((\omega,\theta),p) \mapsto \widehat{\phi}(\xi) =
  \int\limits_{-\infty}^{\infty}
    \exp (-i \, \xi \, p) \, \phi((\omega,\theta), p) \,
  dp,
\end{displaymath}
to the convolution relation
\begin{math}
  \psi = \q(\phi),
\end{math}
we get the product relation
\begin{math}
  \widehat{\psi} = \widehat{\q} \, \widehat{\phi}
\end{math}
where the Fourier multiplier~$\widehat{\q}$ can be regularized with an exponential convergence factor as in Lemma~\ref{l:phat} in Section~\ref{s:fourier}:
\begin{displaymath}
  \begin{split}
    \widehat{\q} &=
      \int_{\r{n}}
        \exp ( i \, \xi \, (\omega,\theta) \cdot (u,-|u|^{2}) ) \,
      du\\ &=
      \lim_{\epsilon \to 0^{+}}
      \int_{\r{n}}
        \exp( i \, \xi \, \omega \cdot u - (\epsilon + i \, \xi \, \theta \, |u|^{2} ) ) \,
      du.
  \end{split}
\end{displaymath}
Now, as follows from Equation~\eqref{e:phat},
\begin{displaymath}
  \widehat{\q} =
    \left( \frac{\pi}{|\xi \, \theta|} \right)^{\frac{n}{2}} \,
    \exp \left( i \, \frac{|\xi \, \omega|^{2} - \pi \, n \, |\xi \, \theta|}{4 \, \xi \, \theta} \right)
\end{displaymath}
and the reciprocal relation
\begin{math}
  \widehat{\phi} = (1 / \widehat{\q}) \, \widehat{\psi}
\end{math}
can be written as
\begin{displaymath}
  \widehat{\phi} = \frac{|\theta|^{n}}{\pi^{n}} \,
  \left[
    \left( \frac{\pi}{|\xi \, \theta|} \right)^{\frac{n}{2}} \,
    \exp \left( -i \, \frac{|\xi \, \omega|^{2} - \pi \, n \, |\xi \, \theta|}{4 \, \xi \, \theta} \right)
  \right]
  \, |\xi|^{n} \, \widehat{\psi}.
\end{displaymath}
Recognizing the bracketed expression as
\begin{math}
  \overline{\widehat{\q}} = \widehat{\q^{*}}
\end{math}
and~$|\xi|^{n}$ as the familiar Fourier symbol of the $\Lambda$-operator (acting on the variable $p$), we are led to the statement of the lemma.
\end{proof}
Using Lemma~\ref{l:iq} and Radon inversion formula~\eqref{e:iradon}, it is easy to invert~$\norton$ as Theorem~\ref{t:ip_radon} demonstrates.
\begin{theorem} \label{t:ip_radon}
Let
\begin{math}
  F \in \sr{n+1}.
\end{math}
The following inversion formula holds
\begin{equation} \label{e:ip_radon}
  F = (\radon^{-1} \circ \q^{-1} \circ \radon \circ \norton)(F),
\end{equation}
where~$\q^{-1}$ is given by Equation~\eqref{e:iq} in Lemma~\ref{l:iq}.
\end{theorem}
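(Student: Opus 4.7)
The plan is to establish the intertwining identity
\[
  \radon \circ \norton = \q \circ \radon
\]
on $\sr{n+1}$ and then read off~\eqref{e:ip_radon} by composing with the inverses furnished by Lemma~\ref{l:iq} and the classical Radon inversion~\eqref{e:iradon}. Granted the intertwining, setting $G = \norton(F)$ yields $\radon(G) = \q(\radon F)$, so Lemma~\ref{l:iq} gives $\radon F = \q^{-1}(\radon G)$, and applying $\radon^{-1} = c_{n}\,\radon^{*}\Lambda_{p}$ recovers $F = (\radon^{-1} \circ \q^{-1} \circ \radon \circ \norton)(F)$, which is exactly~\eqref{e:ip_radon}.

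To prove the intertwining, I would exploit the fact that $\norton$ is a superposition of translations. From~\eqref{e:p},
\[
  \norton(F)(x,y) = \int_{\r{n}} \tau_{(u,-|u|^{2})}(F)(x,y)\,du,
\]
with $\tau$ the shift introduced in Section~\ref{s:radon}. Formally commuting $\radon$ with the $u$-integral and invoking the translation property~\eqref{e:tau},
\[
  \radon(\norton F)((\omega,\theta),p) = \int_{\r{n}} \radon(F)\bigl((\omega,\theta),\,p + (\omega,\theta)\cdot(u,-|u|^{2})\bigr)\,du,
\]
which is precisely $\q(\radon F)((\omega,\theta),p)$ by the definition~\eqref{e:q}. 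Equivalently, for each fixed $u$ the rigid translation $(x,y)\mapsto(x+u,\,y-|u|^{2})$ carries the hyperplane $(\omega,\theta)\cdot(x,y)=p$ isometrically onto the parallel hyperplane at signed distance $p + (\omega,\theta)\cdot(u,-|u|^{2})$ and preserves the $n$-dimensional Lebesgue measure $dm$, so the inner plane-integral becomes the Radon transform of $F$ at the shifted parameter.

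The main obstacle is \emph{technical}: justifying that $\radon(\norton F)$ is even well-defined as a classical integral and that Fubini permits commuting the $u$-integration with the plane-integration. For $F \in \ccr{n+1}$, Lemma~\ref{l:compact} is tailor-made for this, ensuring that $\supp(\norton F) \cap \Pi$ is compact for every hyperplane $\Pi$ in general position, which reduces Fubini to a bounded-support interchange. The extension to Schwartz $F \in \sr{n+1}$ requires a routine rapid-decay estimate on $|F(x+u, y-|u|^{2})|$ to dominate the double integral. One must also confirm that $\radon F \in \mathscr{S}(S^{n}\times\r{1})$, a standard fact from Chapter~I of~\cite{Helgason-1999}, and that $\q(\radon F)$ inherits enough decay in $p$ for Lemma~\ref{l:iq} to apply; both follow from the rapid decay of~$F$. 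Once the integrals are known to converge absolutely, the intertwining holds pointwise and the three-step inversion chain above completes the proof.
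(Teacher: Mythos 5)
Your proposal is correct and follows essentially the same route as the paper: apply $\radon$ to the convolution relation~\eqref{e:p}, interchange the order of integration, and use the translation property~\eqref{e:tau} to obtain $\radon(G)=(\q\circ\radon)(F)$, then compose with $\q^{-1}$ and $\radon^{-1}$. The extra care you take with Fubini and with Lemma~\ref{l:compact} is a welcome elaboration, but it does not change the argument, which the paper states in the same three steps.
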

\begin{proof}
Applying Radon transform~\eqref{e:radon} to Equation~\eqref{e:p}, interchanging the order of integrations, and using the translational property~\eqref{e:tau} of Radon transform~$\radon$, we obtain  the relation
\begin{displaymath}
  \radon(G)((\omega,\theta),p) =
  \int_{\r{n}}
    \radon(F)((\omega,\theta), p + (\omega,\theta) \cdot (u,-|u|^{2})) \,
  du,
\end{displaymath}
which we can rewrite in operator form as
\begin{displaymath}
  \radon(G) = (\q \circ \radon)(F).
\end{displaymath}
Now writing
\begin{math}
  G = \norton(F)
\end{math}
and applying the inverses~$\q^{-1}$ and~$\radon^{-1}$, in that order, we get the statement of the theorem.
\end{proof}
%\input{conclusions}
% Last edit: Wednesday, October 7, 2009  14:45
% Status: done

\section{Conclusions}
\label{s:conclusions}

The inversion formulas for the spherical mean transform~$\sonar$, presented in Sections~\ref{s:results} and~\ref{s:radon}, strengthen the existing results and shed new light on the relation between spherical means and classical Radon theory.  The Fourier-Radon techniques used to obtain our results have the virtue of simplicity, although this, admittedly, is a relative term.  Unfortunately, the same techniques are unlikely to apply to spherical mean transforms with curved centersets.  Indeed, from the group-theoretic point of view, our inversion of the operator~$\sonar$ hinges on replacing its symmetry group with a much simpler symmetry group of the operator~$\norton$: notice that the symmetries are being replaced, not created.  Yet, there are few centersets in~$\r{n}$ with sufficiently large symmetry groups. \footnote{In~$\r{2}$ all symmetric, or \emph{homogeneous}, centersets can be enumerated by classifying the one-parameter subgroups of the M\"obius group of the plane.  The complete list, presented in the author's doctoral thesis, is limited to lines, rays, circles, open intervals, and logarithmic spirals.}  In fact, flat centersets are the most symmetric, followed by spheres.  The symmetry group of a spherical centerset~$\operatorname{SO}(n)$ has lower dimension than that of~$\norton$.  This means that averages over the spheres centered on a sphere \( S^{n-1} \subset \r{n} \) cannot be straightforwardly related to \emph{any} convolution in~$\r{n}$.  Nevertheless, we consider our \emph{ad hoc} inversion worthy of attention.  The compositional structure of~$\sonar^{-1}$ exhibited in Equation~\eqref{e:is_norton} suggests a straightforward and effective FFT-based reconstruction algorithm.  We plan to pursue the development of such an algorithm and its numerical analysis elsewhere.
\par
It would also be of interest to extend the two Lemmas in Section~\ref{s:range} to a full range characterization of spherical means~$\sonar$.  We conjecture that a \emph{continuous} function~$G$ which has asymptotic behavior
\[
  G(x + v, y+|v|^{2}) = \bigo(|v|^{-1})
\]
given by Equation~\eqref{e:asymptotic} is the image of a rapidly decreasing function~$F$ under~$\norton$.  If, as in Lemma~\ref{l:compact}, the support of~$G$ can be covered by a union of parabolas with vertices ranging over a compact set, the pre-image~$F$ must be compactly supported.

\appendix
\section{Proof of Lemma~\ref{l:pullback}}
\label{s:appendix1}

\begin{proof}[Proof of Lemma~\ref{l:pullback}]
For notational convenience, let us define mapping
\begin{math}
  \Phi: \r{n} \mapsto S_{-}^{n}
\end{math}
by
\begin{equation*}
  \Phi(z) = \frac{(z,-1)}{\sqrt{1 + |z|^{2}}}.
\end{equation*}
Geometrically, $\Phi$ projects the points of the tangent space~$T_{(0,\ldots,0,-1)}(S^{n})$ onto~$S_{-}^{n}$ as illustrated by Figure~\ref{fig:Phi}; we may therefore call $\Phi$ a \emph{central projection}.

\begin{figure}[ht!]
  \begin{center}
    \scalebox{.85}{\includegraphics{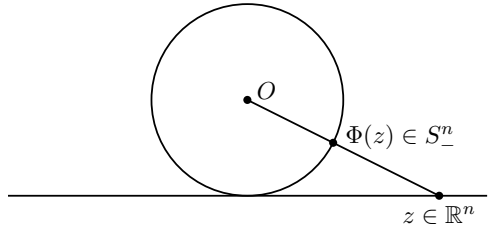}}
  \end{center}
  \caption{The central projection $\Phi$} \label{fig:Phi}
\end{figure}

Notice that~$\Phi$ is invertible with the inverse
\begin{math}
  \Phi^{-1}: S_{-}^{n} \mapsto \r{n}
\end{math}
given  by
\begin{displaymath}
  \Phi^{-1}((\omega,\theta)) = \frac{\omega}{|\theta|};
\end{displaymath}
notice further that~$\Phi$ is  smooth: it is therefore a diffeomorphism between the Euclidean space~$\r{n}$ and the unit half-sphere~$S_{-}^{n}$.
\par
We shall now introduce some additional terms from differential geometry.  Let~$\vol_{S^{n}}$ denote the Riemannian volume form on the unit sphere
\begin{math}
  S^{n} \subset \r{n+1}
\end{math}
(restricted to the lower half-sphere~$S_{-}^{n}$) and, likewise, let
\begin{math}
  \vol_{\r{n}}(z) = dz_{1} \wedge \ldots \wedge z_{n}
\end{math}
denote the natural volume in~$\r{n}$; we shall use the absolute value sign to distinguish a differential form from its corresponding density;  finally, we denote by
\begin{math}
  \Phi^{*}:  \Lambda^{k}(S_{-}^{n}) \mapsto \Lambda^{k}(\r{n})
\end{math}
the \emph{pull-back} mapping induced by
\begin{math}
  \Phi: \r{n} \mapsto S_{-}^{n}
\end{math}
on the space of $k$-forms.  We can now restate Equation~\eqref{e:equal} abstractly as
\begin{displaymath}
  \int_{\r{n}}
    (h \circ \Phi)(z) \,
  \left| \frac{\vol_{\r{n}}(z)}{(1 + |z|^{2})^{\frac{n+1}{2}}} \right| =
  \int_{\Phi(\r{n})}
    h((\omega,\theta)) \,
    \left| \vol_{S^{n}}(\omega,\theta) \right|,
\end{displaymath}
noting that the proof of the lemma reduces to showing the equality of two densities
\begin{displaymath}
  \left| \Phi^{*}(\vol_{S^{n}})(z) \right|  =
  \left| \frac{\vol_{\r{n}}(z)}{(1 + |z|^{2})^{\frac{n+1}{2}}} \right|,
\end{displaymath}
which will follow if~$\Phi^{*}(\vol_{S^{n}})(z)$ and~$(1 + |z|^{2})^{-(n+1)/2} \, \vol_{\r{n}}(z)$ are shown to agree up to sign.  Equivalently, we must show that
\begin{equation*}
  \frac{\Phi^{*}(\vol_{S^{n}})(z)}{\vol_{\r{n}}(z)} = \pm
    \frac{1}{(1 + |z|^{2})^{\frac{n+1}{2}}}.
\end{equation*}
Therefore we define~$\vol_{S^{n}}$ explicitly by
\begin{equation*}
  \begin{split}
    \vol_{S^{n}}(\omega,\theta)
  &=
    \sum_{i=1}^{n}
      (-1)^{n+1+i} \, \omega_{i} \,
      d\omega_{1} \wedge \ldots  \mathring{d\omega_{i}} \ldots \wedge d\omega_{n} \wedge d\theta\\
  &+
      \theta \, d\omega_{1} \wedge \ldots \wedge d\omega_{n},
  \end{split}
\end{equation*}
where the empty dot over~$d\omega_{i}$ indicates omission of the $i$-th term in the wedge product. \footnote{We use the ``empty dot'' accent rather than the more customary ``hat'' from differential geometry to stay true to our promise in Section~\ref{s:fourier} to reserve the ``hat''-symbol exclusively for the Fourier transform in~$\r{n}$}  Now from the definition of the pull-back of a mapping in differential geometry, we conclude that
\begin{displaymath}
  \begin{split}
    \frac{\Phi^{*}(\vol_{S^{n}})(z)}{\vol_{\r{n}}(z)}
  &=
    \sum_{i=1}^{n}
      (-1)^{n+1+i} \, \omega_{i}(z) \, \Phi^{*}
      \left(
        d\omega_{1} \wedge \ldots  \mathring{d\omega_{i}} \ldots \wedge d\omega_{n} \wedge d\theta
      \right)\\
  &+
      \theta(z) \, \Phi^{*}
      \left(
        d\omega_{1} \wedge \ldots \wedge d\omega_{n}
      \right) = \det(A),
  \end{split}
\end{displaymath}
where~$A$ is the following
\begin{math}
  (n+1) \times (n+1)
\end{math}
matrix
\begin{equation*}
  A =
  \left[
    \begin{array}{cccc}
      \frac{\partial \omega_{1}}{\partial z_{1}} & \hdots & \frac{\partial \omega_{1}}{\partial z_{n}} & \omega_{1} \\
      \vdots & \ddots & \vdots & \vdots \\
      \frac{\partial \omega_{n}}{\partial z_{1}} & \hdots & \frac{\partial \omega_{n}}{\partial z_{n}} & \omega_{n} \\
      \frac{\partial \theta}{\partial z_{1}} & \ldots & \frac{\partial \theta}{\partial z_{n}} & \theta \\
    \end{array}
  \right]
\end{equation*}
with the entries given explicitly by
\begin{equation*}
  \begin{split}
    \frac{\partial \omega_{i}}{\partial z_{j}}
  &=
      \frac{1}{(1 + |z|^{2})^{\frac{3}{2}}} \,
      \begin{cases}
        -z_{i} \, z_{j}, &i \neq j,\\
        \phantom{-}1 + |z|^{2} - z_{i}^{2}, &i = j.
      \end{cases}\\
    \frac{\partial \theta}{\partial z_{j}} &= \frac{z_{j}}{(1 + |z|^{2})^{\frac{3}{2}}}.
  \end{split}
\end{equation*}
as can be easily ascertained.  The proof of the lemma is now reduced to a determinant computation: namely, we must show that
\begin{displaymath}
  \det(A) = \pm \frac{1}{(1 + |z|^{2})^{\frac{n+1}{2}}}.
\end{displaymath}
We will find the determinant of~$A$ in two steps.  First, we introduce the matrix~$B$ by
\begin{displaymath}
  B =
  \left[
    \begin{array}{cccc}
      1 + |z|^{2} - z_{1}^{2} & \hdots & -z_{1} \, z_{n} & z_{1} \\
      \vdots & \ddots & \vdots & \vdots \\
      -z_{n} \, z_{1} & \hdots & 1 + |z|^{2} - z_{n}^{2} & z_{n} \\
      z_{1} & \hdots & z_{n} & -1 \\
    \end{array}
  \right]
\end{displaymath}
noticing that the first~$n$ columns of~$A$ are the corresponding columns of~$B$  scaled by
\begin{math}
  (1 + |z|^{2})^{-3/2}
\end{math}
while the last column in~$A$ is obtained from the last column of~$B$ via scaling by
\begin{math}
    (1 + |z|^{2})^{-1/2}.
\end{math}
Consequently, we have
\begin{displaymath}
  \det(A) = \frac{1}{(1 + |z|^{2})^{\frac{3\,n + 1}{2}}} \, \det(B),
\end{displaymath}
and our task therefore is to compute~$\det(B)$.  To this end, we introduce a third and final matrix
\begin{displaymath}
  C =
  \left[
    \begin{array}{cccc}
      z_{1}^{2} & \hdots & z_{1} \, z_{n} & -z_{1} \\
      \vdots & \ddots & \vdots & \vdots \\
      z_{n} \, z_{1} & \hdots & z_{n}^{2} & -z_{n} \\
      -z_{1} & \hdots & -z_{n} & 2 + |z|^{2} \\
    \end{array}
  \right]
\end{displaymath}
whose connection to~$B$ is expressed by the relation
\begin{displaymath}
  B = (1 + |z|^{2}) \, \id - C,
\end{displaymath}
where~$I$ is the
\begin{math}
  (n+1) \times (n+1)
\end{math}
identity matrix.  From the relation between~$B$ and~$C$ follows the relation between their determinants
\begin{displaymath}
  \det(B) = -P_{C}(1 + |z|^{2}),
\end{displaymath}
where~$P_{C}(1 + |z|^{2})$ is the value of the characteristic polynomial of~$C$ at~$(1 + |z|^{2})$.  Thus to find~$\det(B)$ we need the characteristic polynomial of~$C$ which prompts us to examine the eigenvalues of~$C$.
\par
It is straightforward to verify by direct computation that~$C$ has rank~$2$ with the following~$(n-1)$ eigenvectors sharing the zero eigenvalue:
\begin{displaymath}
  \left[
    \begin{array}{c}
      -z_{2} \\
      \phantom{-}z_{1} \\
      \phantom{-}0 \\
      \phantom{-}\vdots \\
      \phantom{-}0 \\
      \phantom{-}0 \\
    \end{array}
  \right], \quad
  \left[
    \begin{array}{c}
      -z_{3} \\
      \phantom{-}0\\
      \phantom{-}z_{1} \\
      \vdots \\
      \phantom{-}0\\
      \phantom{-}0\\
    \end{array}
  \right], \ldots, \quad
  \left[
    \begin{array}{c}
      -z_{n} \\
      \phantom{-}0\\
      \phantom{-}0\\
      \phantom{-}\vdots\\
      \phantom{-}z_{1}\\
      \phantom{-}0\\
    \end{array}
  \right] \in \r{n+1};
\end{displaymath}
while the two remaining eigenvectors of~$C$
\begin{displaymath}
  \left[
    \begin{array}{c}
      z_{1} \\
      z_{2}\\
      z_{3}\\
      \phantom{-}\vdots\\
      z_{n}\\
      -1 \pm \sqrt{1 + |z|^{2}} \\
    \end{array}
  \right] \in \r{n+1}
\end{displaymath}
have conjugate eigenvalues
\begin{math}
  1 + |z|^{2} \mp \sqrt{1 + |z|^{2}}.
\end{math}
We conclude that the characteristic polynomial of~$C$ is given explicitly by
\begin{displaymath}
  P_{C}(\lambda) = \det(C - \lambda \, \id) =
  \lambda^{n-1} \, \left( (\lambda - 1 - |z|^{2})^{2} - 1 - |z|^{2} \right).
\end{displaymath}
Hence,
\begin{displaymath}
  \det(B) = -p_{C}(1 + |z|^{2}) = (1 + |z|^{2})^{n},
\end{displaymath}
which implies
\begin{displaymath}
  \det(A) = \frac{1}{(1 + |z|^{2})^{\frac{3\,n + 1}{2}}} \, (1 + |z|^{2})^{n} =
  \frac{1}{(1 + |z|^{2})^{\frac{n + 1}{2}}},
\end{displaymath}
as desired.
\end{proof}

%GATHER{biblio.bib} % For Gather Purpose Only
%GATHER{root.bbl}   % For Gather Purpose Only
\bibliographystyle{amsplain}
%\bibliography{biblio}

\providecommand{\bysame}{\leavevmode\hbox to3em{\hrulefill}\thinspace}
\providecommand{\MR}{\relax\ifhmode\unskip\space\fi MR }
% \MRhref is called by the amsart/book/proc definition of \MR.
\providecommand{\MRhref}[2]{%
  \href{http://www.ams.org/mathscinet-getitem?mr=#1}{#2}
}
\providecommand{\href}[2]{#2}

\end{document}